\documentclass[11pt]{article}
\usepackage{mathrsfs}
\usepackage{amsfonts}
\usepackage{amssymb,amsmath,amsthm}
\newcommand{\R}{\mathbb{R}}

\newcommand{\beq}{\begin{equation}}
\newcommand{\eeq}{\end{equation}}
\newcommand{\bp}{\begin{proof}}
\newcommand{\ep}{\end{proof}}
\newcommand{\bo}{\begin{proposition}}
\newcommand{\eo}{\end{proposition}}
\newcommand{\bl}{\begin{lemma}}
\newcommand{\el}{\end{lemma}}

\textheight 23cm \textwidth 16cm

\oddsidemargin 0pt \evensidemargin 0pt \topmargin 0pt

\baselineskip 8pt

\newtheorem{theorem}{Theorem}[section]

\newtheorem{lemma}[theorem]{Lemma}
\newtheorem{proposition}{Proposition}

\theoremstyle{definition}

\newtheorem{remark}{Remark}

\begin{document}
\title{\LARGE\bf{Maximum principles for nonlinear integro-differential equations and symmetry of solutions}$\thanks{{\small This work was partially supported by NSFC(12031012), NSFC(11831003) and Shanghai Jiao Tong University Scientific and Technological Innovation Funds.}}$ }
\date{}
 \author{Huxiao Luo$^{1,2}$$\thanks{{\small Corresponding author. E-mail: luohuxiao@zjnu.edu.cn (H. Luo), xmq157@sjtu.edu.cn (M. Xu).}}$, Meiqing Xu$^{2}$\\
\small 1 Department of Mathematics, Zhejiang Normal University, Jinhua, Zhejiang, 321004, P. R. China\\
\small 2 School of Mathematical Sciences, CMA-Shanghai, Shanghai Jiao Tong University, P. R. China
}
\maketitle
\begin{center}
\begin{minipage}{13cm}
\par
\small  {\bf Abstract:} In this paper, we study the semilinear integro-differential equations
\begin{equation*}
\mathcal{L}_{K}u(x)\equiv C_n\text{P.V.}\int_{\R^n}\left(u(x)-u(y)\right)K(x-y)dy=f(x,u),
\end{equation*}
and the full nonlinear integro-differential equations
\begin{equation*}
F_{G,K}u(x)\equiv C_n\text{P.V.}\int_{\R^n}G(u(x)-u(y))K(x-y)dy=f(x,u),
\end{equation*}
where $K(\cdot)$ is a symmetric jumping kernel and $K(\cdot)\geq C|\cdot|^{-n-\alpha}$,
$G(\cdot)$ is some nonlinear function without non-degenerate condition. We adopt the direct method of moving planes to study the symmetry and monotonicity of solutions for the integro-differential equations, and investigate the limit of some non-local operators $\mathcal{L}_{K}$ as $\alpha\to2.$ Our results extended some results obtained in \cite{CL} and \cite{CLLG}.

 \vskip2mm
 \par
 {\bf Keywords:} Nonlinear integro-differential equations; Method of moving planes; Narrow region principle; Radial symmetry.
 \vskip2mm
 \par
 {\bf MSC(2010): } 35B09, 35B50, 35B53, 35S05, 35R11

\end{minipage}
\end{center}

 {\section{Introduction}}
 \setcounter{equation}{0}
In the first part of this article, we study the integro-differential equations with linear non-local operator
\begin{equation}\label{20231012-e0}
\mathcal{L}_{K}u(x)\equiv C_{n}\text{P.V.}\int_{\R^n}(u(x)-u(x+y))K(y)dy=f(x,u),\quad \text{in}~\R^n,
\end{equation}
where $P.V.$ denotes the Cauchy principal value integral, the kernel $K$ is a positive function with the properties that $K(-y) =K(y)$ and
\begin{itemize}
\item[($K_1$)] $\forall y\in\R^n\setminus\{0\}$, $K(y)\geq(2-\alpha)\frac{c}{|y|^{n+\alpha}}$ for some $c>0$,
where $\alpha\in(0,2)$.
\end{itemize}
 In order $\mathcal{L}_Ku(x)$ to make sense in $\R^n$, we require that $u\in C^{1,1}_{loc}(\R^n)\cap L^\infty(\R^n)$ and $K$ satisfies the standard L\'{e}vy-Khintchine condition
 \begin{equation}\label{20231103-e100}
\int_{\R^n}\frac{|y|^2}{|y|^2+1}K(y)dy<+\infty,
\end{equation}
see \cite{CS1}.
The operator $\mathcal{L}_{K}$ arises in stochastic control problems with purely jump L\'{e}vy processes, see \cite{CS1,CRS,SYK}.
As a model for $K$, we can take the function
$$K(y)=\frac{1}{|y|^{n+\alpha}}~~\forall y\in\R^n\setminus\{0\},~~0<\alpha<2.$$
In this case, up to some normalization constant, $\mathcal{L}_{K}$ is the fractional Laplacian $(-\Delta)^{\frac{\alpha}{2}}$. The fractional Laplacian is a nonlocal operator, which makes the existence, symmetry, monotonicity and regularity of solutions for fractional Laplacian equation difficult to study. To circumvent the non-locality of the fractional Laplacian, L. Caffarelli and L. Silvestre \cite{CS} introduced an extension method that provides a local realization of the fractional Laplacian by means of a divergence operator in the upper half-space $\R^{n+1}_+$. A series of fruitful results about fractional Laplacian equation have been obtained by the extension method, see e.g., \cite{BCPS,CXSY,FLS,OS} and the references therein. Other approaches of studying fractional Laplacian equation rely on available Green function representations associated with $(-\Delta)^{\frac{\alpha}{2}}$, see, e.g., \cite{CFY,CLO,CZSR,FW,LiZhuo}. However, either by the extension method or by the Green function representation, some extra conditions on the solutions need to be assumed. In \cite{CLL}, W. Chen, C. Li and Y. Li applied a direct method of moving planes for the fractional Laplcaian, and obtained symmetry, monotonicity, and non-existence of the positive solutions. This method has been extensively explored in prior works such as \cite{berestycki1988monotonicity,caffarelli,chen1997priori,chenw,li} and further developed in recent research contributions, including \cite{chenw,CLL,CLO,cheng2017direct}.

In this work, by establishing maximum principle for anti-symmetric functions, decay
at infinity and narrow region principle for $\mathcal{L}_K$, we then use the direct method of moving planes \cite{CLL} to prove the symmetry and monotonicity of the positive solutions for \eqref{20231012-e0}. The nonlocal operators $\mathcal{L}_K$ include the fractional Laplacian but also more general operators which may be anisotropic and may
have varying order. Additionally, we also consider nonlocal operator $\mathcal{L}_{\mathcal{K}}$
$$\mathcal{L}_{\mathcal{K}}u(x)=C_n\text{P.V.}\int_{\R^n}(u(x)-u(x+y))\mathcal{K}(y)dy,$$
with the exponential decay kernel
\begin{equation}\label{20231105-e1}\mathcal{K}(y)=\frac{1}{\Gamma(\frac{2-\alpha}{2})}\frac{e^{-|y|^2}}{|y|^{n+\alpha}},\end{equation}
where $\Gamma(\cdot)$ is the Gamma function. This kind of operator with exponential decay kernel was introduced by
L. Caffarelli and L. Silvestre \cite{CS1}.

In order to get the symmetry of solutions, we assume that $K(y)$ is monotonically decreasing with respect to $|y_i|$ ($i=1,\cdot\cdot\cdot,n$) where $i$ is the i-th component of $y$, i.e., ($K_2$) or ($K'_2$)
\begin{itemize}
\item[($K_2$)] for any $y'\in\R^{n-1}$, $y_i, \bar{y}_i\in \R$ with $y_i^2<\bar{y}_i^2$ we have $K(y_i,y')> K(\bar{y}_i,y');$
\item[($K'_2$)] for any $y=(y_i,y')$, there is a function $\bar{K}_i\in C^1(\R^n\setminus\{0\})$ such that $\bar{K}_i(y_i^2,y')=K(y_i,y')$ and
$\partial_i \bar{K}_i<0.$
\end{itemize}
\begin{remark}
The exponential decay kernel $\mathcal{K}$ is a positive even function, and satisfies the standard L\'{e}vy-Khintchine condition
\eqref{20231103-e100} and the monotonically decreasing condition ($K_2$) ($K'_2$). However, condition ($K_1$) doesn't hold for $\mathcal{K}$. So, if the condition ($K_1$) is used in somewhere of this article, we will also prove this part by replacing this special kernel $\mathcal{K}$ with the general function $K$.
\end{remark}

Now we give some kernel functions which satisfy conditions \eqref{20231103-e100}, ($K_1$), ($K_2$) and ($K'_2$).
\begin{itemize}
\item[(i)] The kernel $K$ has the form
$$K(y)=(2-\alpha)\frac{y^{T}\Lambda y}{|y|^{n+2+\alpha}},~~0<\alpha<2,$$
where
$$\Lambda=diag\{\lambda_1,\cdot\cdot\cdot,\lambda_n\},~~0<\lambda_1\leq\lambda_2\leq\cdot\cdot\cdot\leq\lambda_n.$$
\item[(ii)] The kernel functions of fractional Laplacian after matrix transformation
$$K(y)=(2-\alpha)\frac{1}{\det{\Lambda}|\Lambda^{-1}y|^{n+\alpha}}~~0<\alpha<2.$$
\item[(iii)] The operators of order varying between $\alpha$ and $\beta$
$$K(y)=\frac{1}{|y|^{n+\beta}}~\text{for}~ |y|\leq 1~\text{and }~K(y)=\frac{1}{|y|^{n+\alpha}}~\text{for}~ |y|\geq 1,~~0<\alpha\leq\beta<2.$$
\item[(iv)] The anisotropic fractional Laplacian kernel (see \cite{BS,JW,PS})
\begin{equation}\label{20231105-e2}\mathbb{K}(y)=(2-\alpha)\frac{1}{\|y\|^{n+\alpha}},~~0<\alpha<2,\end{equation}
where the norm
$$\|y\|=|y|_p:=\left(\sum_{i=1}^n|y_i|^{p}\right)^{1/p},~~1 \leq p < \infty.$$
By the equivalence of norms in $\R^n$, it is easy to verify that the anisotropic fractional Laplacian kernel satisfies conditions \eqref{20231103-e100}, ($K_1$), ($K_2$) and ($K'_2$).
\end{itemize}


Under conditions ($K_1$) and ($K_2$), we study equation \eqref{20231012-e0} in three cases: (i) bounded domain; (ii) whole space; (iii) half space.
\begin{theorem}\label{th1.1} (i)
Assume that $u\in C_{loc}^{1,1}(B_1(0))\cap C(\overline{B_1(0)})$ is a positive solution of
\begin{equation}\label{20231012-e311}
\left\{
\begin{array}{ll}
\aligned
&\mathcal{L}_Ku(x)=f(u(x)),\quad x\in B_1(0), \\
&u(x)\equiv0,\quad x\notin B_1(0).
\endaligned
\end{array}
\right.
\end{equation}
Assume that $f (\cdot)$ is
Lipschitz continuous. Then $u$ must be radially symmetric and monotone decreasing about the origin.

(ii)
Assume that $u\in C_{loc}^{1,1}(\R^n)\cap L^\infty(\R^n)$ is a positive solution of
\begin{equation}\label{20231012-e321}
\mathcal{L}_Ku(x)=g(u(x)),\quad x\in\R^n.
\end{equation}
 Suppose, for some
$\gamma> 0,$
$$u(x) = o\left(\frac{1}{|x|^\gamma}\right),\quad\text{as}~ |x|\to\infty,$$
and
$$g'(s) \leq s^q,\quad\text{with}~ q\gamma \geq\alpha.$$
Then $u$ must be radially symmetric and monotone decreasing about some point in $\R^n$.

(iii)
Assume that $u\in C_{loc}^{1,1}(\R^n_+)\cap L^\infty(\R^n_+)$ is a nonnegative solution of
\begin{equation}\label{20231013-e1}
\left\{
\begin{array}{ll}
\aligned
&\mathcal{L}_Ku(x)=h(u(x)),\quad x\in\R^n_+, \\
&u(x)\equiv0,\quad x\notin \R^n_+,
\endaligned
\end{array}
\right.
\end{equation}
where
$$\R^n_+ = \{x = (x_1, \cdot\cdot\cdot, x_n) | x_n > 0\}.$$
Suppose that $h(s)$ is Lipschitz continuous in the range of $u$, and $h(0) = 0$. If
\begin{equation}\label{20231014-e1}
\liminf\limits_{|x|\to\infty} u(x) = 0,
\end{equation}
then $u \equiv 0.$
\end{theorem}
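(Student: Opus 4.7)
The plan is to establish all three parts via the direct method of moving planes, leaning on the narrow region principle, decay at infinity principle, and maximum principle for antisymmetric functions that are developed earlier in the paper. Throughout, for each direction $e \in S^{n-1}$ (take $e = e_1$ for concreteness), set $T_\lambda = \{x_1 = \lambda\}$, $\Sigma_\lambda = \{x_1 < \lambda\}$, and define the reflection $x^\lambda = (2\lambda - x_1, x_2, \ldots, x_n)$ and the antisymmetric difference $w_\lambda(x) = u(x^\lambda) - u(x)$. The crucial computation is that, by the symmetry $K(-y) = K(y)$ together with the monotonicity condition $(K_2)$, one obtains the pointwise identity
\begin{equation*}
\mathcal{L}_K w_\lambda(x) = c_\lambda(x) w_\lambda(x) + R_\lambda(x), \qquad x \in \Sigma_\lambda,
\end{equation*}
where $c_\lambda$ is a bounded coefficient produced by the Lipschitz/mean value hypotheses on the nonlinearity and $R_\lambda(x) \leq 0$ in $\Sigma_\lambda$ whenever $w_\lambda \geq 0$ there; the sign of the remainder $R_\lambda$ comes from comparing $K(x - y)$ and $K(x - y^\lambda)$ for $y \in \Sigma_\lambda$, which is precisely what condition $(K_2)$ delivers.

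For part (i), I would start the moving plane from $\lambda$ near $-1$, where $\Sigma_\lambda \cap B_1(0)$ is a narrow region. The narrow region principle applied to $w_\lambda$ gives $w_\lambda \geq 0$ in $\Sigma_\lambda$ for such $\lambda$. I then move the plane to the right and set
\begin{equation*}
\lambda_0 = \sup\{\lambda \leq 0 : w_\mu \geq 0 \text{ in } \Sigma_\mu \text{ for all } \mu \leq \lambda\}.
\end{equation*}
The standard argument is to assume $\lambda_0 < 0$ and derive a contradiction: by continuity $w_{\lambda_0} \geq 0$ in $\Sigma_{\lambda_0}$; a strong maximum principle for antisymmetric functions (which follows from the coefficient equation and $(K_1)$) forces $w_{\lambda_0} > 0$ strictly; then combining uniform compactness with the narrow region principle on $\Sigma_{\lambda_0 + \varepsilon} \setminus K$ for a large interior compact set $K$ contradicts the definition of $\lambda_0$. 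Hence $\lambda_0 = 0$ and symmetry in $x_1$ is obtained; repeating for every direction yields radial symmetry and monotonicity about the origin.

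For part (ii), since the domain is all of $\R^n$, I do not have an \emph{a priori} starting plane. I instead invoke the decay at infinity principle: if $w_\lambda$ were negative somewhere in $\Sigma_\lambda$, the negative minimum $\bar x$ would satisfy $\mathcal{L}_K w_\lambda(\bar x) \leq 0$ while, at a minimum point of $w_\lambda$ far from the origin, $c_\lambda(\bar x) \to 0$ fast enough to contradict the sign. This is where the hypothesis $g'(s) \leq s^q$ with $q\gamma \geq \alpha$ and $u(x) = o(|x|^{-\gamma})$ enter: they yield $c_\lambda(\bar x) = O(u(\bar x)^q) = o(|\bar x|^{-\alpha})$, while the nonlocal term controlling $\mathcal{L}_K w_\lambda(\bar x)$ from below decays no faster than $|\bar x|^{-\alpha}$ by $(K_1)$. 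This forces $w_\lambda \geq 0$ in $\Sigma_\lambda$ for all sufficiently negative $\lambda$. Then I move planes rightward as in part (i) and show the critical $\lambda_0$ must be such that $w_{\lambda_0} \equiv 0$, i.e., $u$ is symmetric about some plane in the $x_1$-direction; running the argument in all directions gives radial symmetry about one common point.

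For part (iii), the half-space, I move planes in the $x_n$-direction from $\lambda = +\infty$ downward, writing $T_\lambda = \{x_n = \lambda\}$, $\Sigma_\lambda = \{x_n > \lambda\}$. The condition $\liminf_{|x| \to \infty} u(x) = 0$ combined with $h(0) = 0$ plays the role of decay at infinity: a negative minimum of $w_\lambda$ in $\Sigma_\lambda$ cannot escape to infinity, and Lipschitz continuity of $h$ turns the equation for $w_\lambda$ into one with a bounded coefficient. For large $\lambda$, the narrow region principle on $\Sigma_\lambda$ (which is a half-space thinned from below but still handled by the same local argument near $T_\lambda$ combined with the decay to kill contributions far away) gives $w_\lambda \geq 0$. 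Sliding $\lambda$ down yields $w_\lambda \geq 0$ for every $\lambda > 0$. Letting $\lambda \to 0^+$, this means $u$ is monotone nondecreasing in $x_n$. But combined with the boundary condition $u \equiv 0$ outside $\R^n_+$ and the decay to $0$ along some sequence $|x| \to \infty$, monotonicity forces $u \equiv 0$: indeed translating the plane argument a second time from $\lambda = -\infty$ upward (using the extension of $u$ by $0$) sandwiches $u$ between $0$ and its reflection, forcing $u \equiv 0$.

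The main obstacle, in my view, is the decay at infinity step in part (ii). Ordinary moving planes for the fractional Laplacian exploits an explicit Riesz kernel estimate; here the generality of $K$ under only $(K_1)$ means the lower bound on $\mathcal{L}_K w_\lambda(\bar x)$ must be produced solely from $K(y) \geq c(2-\alpha)|y|^{-n-\alpha}$ and from the reflection geometry, and matched exactly against the decay of $c_\lambda(\bar x)$ coming from the hypotheses on $g$ and $u$; getting the powers of $|\bar x|$ to balance is where the condition $q\gamma \geq \alpha$ is sharp.
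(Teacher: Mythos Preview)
Your approach to parts (i) and (ii) is essentially the paper's: start the plane with the narrow region principle (for (i)) or decay at infinity (for (ii)), use the maximum principle for antisymmetric functions to get strict positivity of $w_{\lambda_o}$, then combine narrow region and decay arguments to push the plane further. That is correct and matches the paper's proof.

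Part (iii), however, has a genuine gap: you are moving the plane in the wrong direction. With your convention $\Sigma_\lambda=\{x_n>\lambda\}$ and $\lambda$ large, the region $\Sigma_\lambda$ is \emph{not} narrow---it is an entire half-space---so the narrow region principle gives you no starting point. Worse, for any $x\in\Sigma_\lambda$ with $x_n>2\lambda$ the reflected point $x^\lambda$ has negative $n$-th coordinate, hence $u(x^\lambda)=0$ and $w_\lambda(x)=-u(x)<0$ automatically; thus $w_\lambda\geq 0$ in $\Sigma_\lambda$ is simply false for every $\lambda>0$ once $u>0$ somewhere. Your ``sandwiching from $\lambda=-\infty$'' step is also ill-posed since the equation $\mathcal{L}_K u=h(u)$ is only assumed on $\R^n_+$.

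The paper instead moves the plane \emph{upward} from $\lambda=0$. One first shows (from $h(0)=0$ and positivity of $K$) that either $u\equiv 0$ or $u>0$ in $\R^n_+$. Assuming $u>0$, set $\Sigma_\lambda=\{0<x_n<\lambda\}$; this is a genuinely narrow strip for small $\lambda$, and for $x\in\Sigma_\lambda$ the reflection $x^\lambda$ stays in $\R^n_+$, so $u_\lambda$ still solves the equation. The narrow region principle then gives $w_\lambda\geq 0$ for small $\lambda>0$. Sliding $\lambda$ upward and arguing that $\lambda_o<\infty$ would force $w_{\lambda_o}\equiv 0$ (impossible, since $w_{\lambda_o}=u_{\lambda_o}>0$ on $\{x_n=0\}$), one obtains $\lambda_o=+\infty$. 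Hence $u$ is monotone \emph{increasing} in $x_n$, which directly contradicts $\liminf_{|x|\to\infty}u(x)=0$, yielding $u\equiv 0$.
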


In the second part of this article, we extend the direct method of moving planes to the generalized fully nonlinear nonlocal operators, which don't satisfy non-degenerate conditions and are more general than the fractional p-Laplacian. Indeed, the direct method of moving planes has been developed to study fully nonlinear fractional equations under non-degenerate condition \cite{CLLG} and fractional p-Laplacian equations \cite{CL}.
In \cite{CLLG}, W. Chen, C. Li and G. Li studied the fully nonlinear non-local equation
 \begin{equation}\label{20230921-e1}
 C_{n,\alpha}\text{P.V.}\int_{\R^n}\frac{G(u(x)-u(z))}{|x-z|^{n+\alpha}}dz=u^q(x),
 \end{equation}
 where $G$ satisfies
 \begin{itemize}
\item[($G_1$)] $G\in C(\R)$ is an odd function, $G(0)=0$, $G$ is strictly monotone increasing for all $t\in\R$,
\end{itemize}
 and the following non-degenerate condition
 \begin{equation}\label{20231104-a1}
G'(t)\geq c_0>0.
  \end{equation}
This non-degenerate condition plays an indispensable role in proving the narrow region principle and decay at infinity, which are the key ingredients for carrying on the method of moving planes.
We point out that the non-degenerate condition is very strict, and we can see that in addition to identity function $G(t)=t$ satisfying this condition, nonlinear functions $G(t)=|t|^{p-2}t$ do not satisfy this condition.

The non-local equation \eqref{20230921-e1} with the case $G(t)=|t|^{p-2}t$ is studied by W. Chen and C. Li in \cite{CL}. The authors established a boundary estimate lemma to overcome the difficulty that the fractional p-Laplacian doesn't satisfy the non-degenerate condition. The boundary estimate is a variant of the Hopf Lemma, and plays the role of the narrow
region principle. By the boundary estimate, the authors proved radial symmetry and
monotonicity for positive solutions in a unit ball and in the whole space. In this proof, the following property of the function $G(t) = |t|^{p-2}t$ is crucial.
\begin{proposition}\label{20231030-p1} For $G(t) = |t|^{p-2}t$, by the mean value theorem, we
have
$$G(t_2) - G(t_1) = G' (\xi)(t_2 - t_1).$$
Then there exists a constant $c_0 > 0$, such that
$$|\xi| \geq c_0 \max\{|t_1|, |t_2|\}. $$
\end{proposition}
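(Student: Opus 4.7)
The plan is to reduce the claim to a one-parameter inequality by exploiting the scaling properties of $G(t)=|t|^{p-2}t$. Since $G$ is odd, I may assume without loss of generality that $|t_2|\ge|t_1|$ and $t_2>0$ (the case $t_1=t_2=0$ is vacuous). Writing $s := t_1/t_2 \in [-1,1]$ and using $G'(\xi)=(p-1)|\xi|^{p-2}$, the mean value identity becomes
$$
(p-1)|\xi|^{p-2}\, t_2(1-s) = t_2^{p-1}\bigl(1-s|s|^{p-2}\bigr),
$$
so the ratio $|\xi|/t_2$ depends only on $s$:
$$
\left(\frac{|\xi|}{t_2}\right)^{p-2} = \frac{1-s|s|^{p-2}}{(p-1)(1-s)} =: \phi(s).
$$
The target inequality $|\xi|\ge c_0\max\{|t_1|,|t_2|\}=c_0 t_2$ is then equivalent to a uniform positive lower bound for $|\xi|/t_2$ over $s\in[-1,1]$.

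Next I would analyze $\phi$ on $[-1,1]$. The only potential trouble spot is $s=1$, where both numerator and denominator vanish; a single L'Hopital step gives $\lim_{s\to 1}\phi(s)=1$, so $\phi$ extends to a continuous function on $[-1,1]$. To see $\phi>0$ throughout, I use that $s\mapsto s|s|^{p-2}$ is strictly increasing on $\mathbb{R}$ for $p>1$; hence $s|s|^{p-2}<1$ whenever $s<1$, which makes the numerator of $\phi$ strictly positive. By continuity on the compact interval $[-1,1]$, there exist constants $0<m\le M<\infty$ with $m\le\phi(s)\le M$ for every $s\in[-1,1]$.

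Finally I would convert back to $|\xi|$, splitting on the sign of $p-2$. If $p>2$, the lower bound $\phi\ge m$ gives $|\xi|/t_2\ge m^{1/(p-2)}>0$; if $1<p<2$, the exponent $1/(p-2)$ is negative, so the upper bound $\phi\le M$ yields $|\xi|/t_2\ge M^{1/(p-2)}>0$. In either case this produces a positive constant $c_0$ depending only on $p$ with $|\xi|\ge c_0\max\{|t_1|,|t_2|\}$. The only step requiring genuine care is confirming $\phi>0$ uniformly on $[-1,1]$, and this reduces to the one-line monotonicity fact for $s\mapsto s|s|^{p-2}$ together with the removable singularity at $s=1$; I do not anticipate a substantive obstacle.
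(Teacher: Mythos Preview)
Your argument is correct. Note, however, that the paper itself does not prove this proposition: it is quoted in the introduction as a property taken from Chen and Li's work on the fractional $p$-Laplacian, and the authors' point is precisely that their own results (Theorem~\ref{th1.2}) bypass this property, replacing it by conditions $(G_2)$, $(G'_2)$ and the Cauchy mean value theorem. So there is no proof in the paper to compare against.

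Your scaling reduction to the one-parameter function
\[
\phi(s)=\frac{1-s|s|^{p-2}}{(p-1)(1-s)}
\]
on $[-1,1]$ is clean and complete: continuity on the compact interval (after removing the singularity at $s=1$ via L'H\^opital), strict positivity from the monotonicity of $s\mapsto s|s|^{p-2}$, and the case split on the sign of $p-2$ all go through as you describe. The only implicit restriction is $p\neq 2$, which is natural here since for $p=2$ the mean-value point $\xi$ is not determined by the identity and the inequality would fail as stated.
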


We consider the integro-differential equation with nonlinear nonlocal operator
\begin{equation}\label{20230921-e3}
F_{G,K}(u(x))\equiv C_{n,\alpha}P.V.\int_{\R^n}G(u(x)-u(z))K(x-z)dz=f(u(x)),\quad x\in\R^n.
\end{equation}
Without the non-degenerate condition \eqref{20231104-a1}, under conditions ($K_1$) and ($K'_2$) we prove
\begin{theorem}\label{th1.2} (i) Let $f'(t) \leq 0$ for $t$ sufficiently small. Assume that $u\in C_{loc}^{1,1}(\R^n)\cap L^\infty(\R^n)$ is a positive solution of \eqref{20230921-e3} with
\begin{equation}\label{e32:202}
\liminf\limits_{|x|\to\infty} u(x)= 0.
\end{equation}
Then $u$ must be radially symmetric and monotone decreasing about some point in $\R^n$.

(ii) Let $f'(t) > 0$ for $t\in\R^n$, and \begin{itemize}
\item[($G_2$)]
$$\limsup\limits_{t\to0^+}\frac{f'(t)}{G'(t)}<+\infty.$$
\end{itemize}
Assume that $u\in C_{loc}^{1,1}(B_1(0))\cap C(\overline{B_1(0)})$ is a positive solution of
\begin{equation}\label{20230921-e2}
\left\{
\begin{array}{ll}
\aligned
&F_{G,K}(u(x))=f(u(x)),\quad x\in B_1(0), \\
&u(x)\equiv0,\quad x\notin B_1(0),
\endaligned
\end{array}
\right.
\end{equation}
where $q\geq\gamma+1$. Then $u$ must be radially symmetric and monotone decreasing about the origin.

(iii) Let $f'(t) > 0$ for $t\in\R^n$, and \begin{itemize}
\item[($G'_2$)] there exist $C_1, C_2>0$ and $\varepsilon>0$ such that
$$\frac{G(t_1)-G(t_2)}{t_1-t_2}\geq C_1t_2^{\gamma},~~\frac{f(t_1)-f(t_2)}{t_1-t_2}\leq C_2t_2^{s}~~\text{with}~\gamma<s,\quad\forall 0<t_1<t_2<\varepsilon.$$
\end{itemize}
Assume that $u\in C_{loc}^{1,1}(\R^n)\cap L^\infty(\R^n)$ is a positive solution of \eqref{20230921-e3}
 and
\begin{equation}\label{20230921-e4}
u(x)\thicksim \frac{1}{|x|^\beta}~\text{for}~|x|~\text{sufficiently~large~and~for~}\beta>\frac{\alpha}{s-\gamma}.
\end{equation}
Then $u$ must be radially symmetric and monotone decreasing about some point in $\R^n$.
\end{theorem}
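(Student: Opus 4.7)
The plan is to apply the direct method of moving planes along an arbitrary direction (say $x_1$), adapted so as not to require the non-degenerate condition $G'\geq c_0>0$. Put $T_\lambda = \{x_1 = \lambda\}$, $\Sigma_\lambda = \{x_1 < \lambda\}$, $x^\lambda = (2\lambda-x_1, x_2, \ldots, x_n)$, and introduce the antisymmetric function $w_\lambda(x) = u(x^\lambda) - u(x)$. The first step, common to all three parts, is to derive the governing identity for $w_\lambda$: splitting the principal value integral in $F_{G,K}(u(x))$ over $\Sigma_\lambda$ and its reflection, applying the change of variables $y\mapsto y^\lambda$, subtracting from the analogous expansion for $F_{G,K}(u(x^\lambda))$, and invoking the mean value theorem, one obtains intermediate values $\xi_1, \xi_2$ and
\begin{equation*}
f(u(x^\lambda)) - f(u(x)) = C_{n,\alpha}\!\!\int_{\Sigma_\lambda}\!\!\Big[G'(\xi_1)\bigl(w_\lambda(x)-w_\lambda(y)\bigr)K(x-y) + G'(\xi_2)\bigl(w_\lambda(x)+w_\lambda(y)\bigr)K(x-y^\lambda)\Big]dy.
\end{equation*}
The monotonicity hypothesis $(K'_2)$ yields $K(x-y)\geq K(x-y^\lambda)$ for $x,y\in\Sigma_\lambda$, which replaces the usual reflection inequality available for $|\cdot|^{-n-\alpha}$.

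For part~(i), I would start with $\lambda$ very negative. Assuming the infimum of $w_\lambda$ in $\Sigma_\lambda$ is negative, hypothesis \eqref{e32:202} places the (approximate) minimiser $\tilde x$ in a region where $u(\tilde x)$ is small, so $f'(u(\tilde x))\leq 0$. At $\tilde x$ the first integrand above is $\leq 0$ because $w_\lambda(\tilde x)\leq w_\lambda(y)$ and $G'\geq 0$; the second is rearranged using $K(\tilde x-y^\lambda)\leq K(\tilde x-y)$ to produce a strictly negative contribution, contradicting that the left-hand side $f'(\eta)w_\lambda(\tilde x)$ is $\geq 0$. Then set $\lambda_0 = \sup\{\lambda : w_\mu\geq 0 \text{ in } \Sigma_\mu\ \forall\mu\leq\lambda\}$. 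If $\lambda_0<+\infty$, a narrow-region argument in a thin strip adjacent to $T_{\lambda_0}$, combined with the strong maximum principle derived from the integral identity (using $K\geq c|\cdot|^{-n-\alpha}$ from $(K_1)$), forces $w_{\lambda_0}\equiv 0$. Arbitrariness of the direction yields radial symmetry about some point.

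For part~(ii), the plane starts at $\lambda$ slightly larger than $-1$. Because $G'(\xi_1)$ may vanish as $u\to 0$ on $\partial B_1(0)$, the standard narrow region principle is unavailable. Hypothesis $(G_2)$, which asserts that $f'/G'$ is bounded as $t\to 0^+$, is precisely calibrated for this: using the identity above at a point of negative minimum of $w_\lambda$ in a narrow strip near $\partial B_1(0)$ and noting that $\xi_1, \xi_2$ scale with $\max\{u(x), u(x^\lambda)\}$ (in the spirit of Proposition~\ref{20231030-p1}), the left-hand term $f'(\eta)w_\lambda(x)$ is absorbed into the nonlocal integral, and the sign of $w_\lambda$ is forced just as in the non-degenerate case. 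The plane is then moved rightward to some critical $\lambda_0$, where the strong maximum principle gives $w_{\lambda_0}\equiv 0$ and the usual symmetry consideration forces $\lambda_0 = 0$. For part~(iii), the bounded-domain estimate is replaced by its decay-at-infinity analogue: hypothesis $(G'_2)$ gives $G'\gtrsim u^{\gamma}$ and $f'\lesssim u^{s}$ with $s>\gamma$ near $0$, and the prescribed decay $u\sim|x|^{-\beta}$ with $\beta>\alpha/(s-\gamma)$ ensures that at a putative negative minimum $\tilde x$ of $w_\lambda$ far from the origin, the left-hand side is of strictly smaller order than the nonlocal term, again producing a contradiction for $\lambda$ sufficiently negative. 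The moving-plane procedure then proceeds as in part~(i).

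The principal obstacle throughout is the simultaneous vanishing of $G'$ and $f'$ at the scale of $u$ being analysed; non-degeneracy is what made the classical narrow region and decay-at-infinity arguments tight in \cite{CLLG}, while in \cite{CL} Proposition~\ref{20231030-p1} provided the needed quantitative control for $G(t)=|t|^{p-2}t$. Here the compatible degeneracy hypotheses $(G_2)$ and $(G'_2)$ are precisely the quantitative matchings between $f$ and $G$ that keep the integral identity informative at the critical scale, both near $\partial B_1(0)$ in part~(ii) and at spatial infinity in part~(iii). Establishing these matched estimates uniformly in $\lambda$, while tracking how $(K_1)$ and $(K'_2)$ interact with the antisymmetric structure via the reflection inequality above, will constitute the core technical work.
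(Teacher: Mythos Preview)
Your overall moving-planes framework is right, and your Step~1 arguments (starting the plane) are essentially on target. The serious gap is in Step~2, the continuation step, for all three parts.

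You propose to close Step~2 via a ``narrow-region argument in a thin strip adjacent to $T_{\lambda_0}$.'' But a narrow-region principle for $F_{G,K}$ would require, at a negative minimiser $x^o$ in the strip, a lower bound of the form
\[
F_{G,K}u_\lambda(x^o)-F_{G,K}u(x^o)\leq C\,G'(\xi)\,w_\lambda(x^o)\cdot\delta^{-\alpha},
\]
and then one needs $G'(\xi)$ bounded away from zero to absorb the $f$-term. Without the non-degeneracy $G'\geq c_0>0$, this step simply fails: the factor $G'(\xi)$ may vanish, and nothing in $(G_2)$ or $(G'_2)$ controls it at \emph{interior} points near $T_{\lambda_0}$. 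This is precisely the obstruction the paper is written to overcome.

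The paper's replacement for the narrow-region principle is a \emph{boundary estimate} (Theorem~\ref{t3:20230921}), a Hopf-type lemma. The mechanism is: if $w_{\lambda_o}>0$ in $\Sigma_{\lambda_o}$, take $\lambda_k\searrow\lambda_o$ and negative minimisers $x^k$ with $\nabla w_{\lambda_k}(x^k)=0$; after showing $\{x^k\}$ is bounded, pass to $x^k\to x^o\in\partial\Sigma_{\lambda_o}$, so that $w_{\lambda_k}(x^k)/\delta_k\to 0$ where $\delta_k=\mathrm{dist}(x^k,T_{\lambda_k})$. One then computes directly, using $(K'_2)$ to extract a nonvanishing derivative $\partial_1\bar K_1$ from $[K(x^k-y)-K(x^k-y^{\lambda_k})]/\delta_k$, that
\[
\limsup_{k\to\infty}\frac{1}{\delta_k}\bigl[F_{G,K}u_{\lambda_k}(x^k)-F_{G,K}u(x^k)\bigr]<0,
\]
contradicting the equation. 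This argument uses only strict monotonicity of $G$, never a lower bound on $G'$. You should replace your narrow-region continuation by this boundary-estimate route.

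A secondary point on part~(ii), Step~1: you invoke ``$\xi_1,\xi_2$ scale with $\max\{u(x),u(x^\lambda)\}$ (in the spirit of Proposition~\ref{20231030-p1}).'' But Proposition~\ref{20231030-p1} is specific to $G(t)=|t|^{p-2}t$ and is \emph{not} assumed here; the paper explicitly avoids it. Instead, restrict the integral to $D=\Sigma_\lambda\setminus B_1(0)$ where $u(y)=0$, so the integrand becomes $G(u_\lambda(x^o))-G(u(x^o))$ with no $y$-dependence in the $G$-arguments; then divide $f(u_\lambda(x^o))-f(u(x^o))$ by $G(u_\lambda(x^o))-G(u(x^o))$ and apply the \emph{Cauchy} mean value theorem to obtain $f'(\xi)/G'(\xi)\geq C\delta^{-\alpha}$, which contradicts $(G_2)$.
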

\begin{remark}
A typical example of $G(\cdot)$ and $f(\cdot)$ which satisfy conditions ($G_2$) and ($G'_2$):
$$G(t) = |t|^{\gamma}t,~~f(t) = |t|^{s}t,\quad \gamma<s.$$
We point out that the crucial property of the function $G(t) = |t|^{p-2}t$ (Proposition \ref{20231030-p1}) isn't used in the proof of Theorem \ref{th1.2}-(ii). And by assuming that $G(\cdot)$ satisfies the mild assumption ($G'_2$), we use Cauchy mean value theorem to overcome this difficulty, see Sec. 3.3.
\end{remark}
Finally, we investigate the limit of $\mathcal{L}_Ku(x)$ as $\alpha\to2$ for each fixed $x$ and discover some
interesting phenomenons.
For
$$K(y)=(2-\alpha)\frac{C_n}{\det{\Lambda}|\Lambda^{-1}y|^{n+\alpha}},$$
from \cite[(6.1)]{CS1} we know that
$$\lim\limits_{\alpha\to2}
\mathcal{L}_Ku(x)=-\sum_{i=1}^n\lambda_i^{2}\partial_{ii} u(x).$$
Indeed, it is well know that
$$(-\Delta)^{\frac{\alpha}{2}}u(x)=C_n(2-\alpha)\text{P.V.}\int_{\R^n}\frac{u(x)-u(y)}{|x-y|^{n+\alpha}}dy\to-\Delta u(x),\quad \text{as}~ \alpha\to2.$$
then by variable substitution
$$C_n(2-\alpha)\text{P.V.}\int_{\R^n}\frac{u(x)-u(y)}{\det{\Lambda}|\Lambda^{-1}(x-y)|^{n+\alpha}}dy\to-\sum_{i=1}^n\lambda_i^{2}\partial_{ii} u(x).$$

Via Taylor's expansion, we prove
\begin{theorem}\label{th1.3} (i) Assume that $u\in C_{loc}^{1,1}(\R^n)\cap L^\infty(\R^n)$. Let $$\mathcal{L}_{\mathcal{K}}u(x)=\frac{4n}{\omega_n}\text{P.V.}\int_{\R^n}(u(x)-u(x+y))\mathcal{K}(y)dy,$$
where $\mathcal{K}$ is the exponential decay kernel defined by \eqref{20231105-e1}.
Then
$$\lim\limits_{\alpha\to2^-}
\mathcal{L}_{\mathcal{K}}u(x)=-\Delta u(x).$$

(ii) Assume that $u\in C_{loc}^{1,1}(\R^n)\cap L^\infty(\R^n)$. Let
$$\mathcal{L}_{\mathbb{K}}u(x)\equiv C_{n}\text{P.V.}\int_{\R^n}(u(x)-u(x+y))\mathbb{K}(y)dy,$$
where $\mathbb{K}$ is the anisotropic fractional Laplacian kernel defined by \eqref{20231105-e2}.
Then
$$\lim\limits_{\alpha\to 2^-}
\mathcal{L}_{\mathbb{K}}u(x)=-C_{n,p}\Delta u(x).$$
\end{theorem}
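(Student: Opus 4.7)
My plan is to handle both parts via a second-order Taylor expansion, exploiting the full symmetry of $\mathcal{K}$ and $\mathbb{K}$ to kill the first-order term and all off-diagonal second-order contributions. Using $K(-y)=K(y)$, I rewrite each operator via the symmetric difference
\[
\mathrm{P.V.}\int_{\R^{n}}(u(x)-u(x+y))K(y)\,dy=\tfrac{1}{2}\int_{\R^{n}}\bigl[2u(x)-u(x+y)-u(x-y)\bigr]K(y)\,dy,
\]
so that, at points of twice differentiability of $u$, the bracketed expression equals $-y^{T}H(x)y+o(|y|^{2})$ and the nonlocal operator reduces to a quadratic form in $H(x)$ integrated against the second-moment matrix of $K$. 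The odd symmetry of $\mathcal{K}$ and $\mathbb{K}$ in each coordinate kills $\int y_{i}y_{j}K(y)\,dy$ for $i\ne j$, and permutation symmetry forces the diagonal entries to coincide. Thus the whole computation boils down to evaluating $\int y_{1}^{2}K(y)\,dy$ (with a cutoff where necessary) and verifying that the Taylor remainder contributes a vanishing correction as $\alpha\to 2^{-}$.

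For part (i) the calculation is global: polar coordinates combined with the substitution $t=r^{2}$ give
\[
\int_{\R^{n}}|y|^{2}\mathcal{K}(y)\,dy=\frac{\omega_{n}}{\Gamma\!\left(\tfrac{2-\alpha}{2}\right)}\int_{0}^{\infty}r^{1-\alpha}e^{-r^{2}}\,dr=\frac{\omega_{n}}{2},
\]
because the radial integral produces exactly $\tfrac{1}{2}\Gamma(\tfrac{2-\alpha}{2})$, cancelling the normalization in front of $\mathcal{K}$. Hence $\int y_{i}y_{j}\mathcal{K}\,dy=\delta_{ij}\,\omega_{n}/(2n)$ and the leading term becomes $-\tfrac{4n}{\omega_{n}}\cdot\tfrac{1}{2}\cdot\tfrac{\omega_{n}}{2n}\Delta u(x)=-\Delta u(x)$. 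For the error I split at $|y|=1$: the inner contribution is bounded by $o(1)\cdot\int_{|y|\leq 1}|y|^{2}\mathcal{K}(y)\,dy$, which is uniformly finite; the outer one is at most $2\|u\|_{\infty}\int_{|y|>1}\mathcal{K}(y)\,dy$, and this vanishes since $\int_{|y|>1}e^{-|y|^{2}}|y|^{-n-\alpha}\,dy$ stays bounded while $1/\Gamma(\tfrac{2-\alpha}{2})\to 0$.

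For part (ii) the decisive cancellation is now carried by the explicit factor $(2-\alpha)$ in $\mathbb{K}$, and the second moment is not globally finite, so I must split at $\{|y|_{p}\leq 1\}$ versus $\{|y|_{p}>1\}$. The tail contributes at most $C(2-\alpha)\int_{|y|_{p}>1}|y|_{p}^{-n-\alpha}\,dy=O((2-\alpha)/\alpha)\to 0$. On the near part I use the cone-measure decomposition $dy=r^{n-1}\,dr\,d\mu_{p}(\omega)$ with $r=|y|_{p}$ and $|\omega|_{p}=1$, obtaining
\[
(2-\alpha)\int_{|y|_{p}\leq 1}y_{i}^{2}|y|_{p}^{-n-\alpha}\,dy=(2-\alpha)\cdot\frac{1}{2-\alpha}\cdot\int_{|\omega|_{p}=1}\omega_{i}^{2}\,d\mu_{p}(\omega)=M_{1}^{p},
\]
independent of $\alpha$ and of $i$ by permutation symmetry of the $p$-norm; the cubic Taylor remainder costs only $(2-\alpha)/(3-\alpha)\to 0$. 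Collecting everything identifies $C_{n,p}=\tfrac{1}{2}C_{n}M_{1}^{p}$. The main technical obstacle in both parts is the alignment between the inner and outer pieces: taken individually the inner second-moment blows up like $1/(2-\alpha)$ or $\Gamma(\tfrac{2-\alpha}{2})$, and the theorem hinges on the fact that the $\alpha$-dependent normalization of each kernel is tailored to cancel this singularity exactly, leaving a finite, $\alpha$-independent coefficient in front of $\Delta u(x)$.
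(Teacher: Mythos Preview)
Your route is essentially the paper's---Taylor expansion, symmetry to remove odd and off-diagonal terms, near/far split---executed a bit differently. You pass to the symmetric second difference at the start and, in part (i), exploit the exact identity $\int_{\R^{n}}|y|^{2}\mathcal{K}(y)\,dy=\omega_{n}/2$ (the radial integral is precisely $\tfrac12\Gamma(\tfrac{2-\alpha}{2})$), which is neater than the paper's procedure of working on $B_{\epsilon}$, sandwiching the key factor in $[e^{-\epsilon^{2}},1]$, and only then letting $\epsilon\to0^{+}$. In part (ii) you split at the unit $p$-ball and use cone measure, whereas the paper splits at a small Euclidean ball and relies on norm equivalence; both identifications of the limiting constant are equivalent.

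That said, several of your error estimates are stated too loosely and, as written, do not close. In part (i), the inner bound ``$o(1)\cdot\int_{|y|\le1}|y|^{2}\mathcal{K}$'' is not justified: the remainder $R(y)=2u(x)-u(x+y)-u(x-y)+y^{T}H(x)y$ is $o(|y|^{2})$ only as $y\to0$, not uniformly on $\{|y|\le1\}$, so you need a further split at $|y|=\delta$ (then $|R|\le\eta|y|^{2}$ on $|y|\le\delta$, while the annulus $\{\delta<|y|\le1\}$ is absorbed by $1/\Gamma(\tfrac{2-\alpha}{2})\to0$). Moreover, since your leading term was computed over all of $\R^{n}$, the outer error on $\{|y|>1\}$ also carries the piece $y^{T}H(x)y$, which your stated bound $2\|u\|_{\infty}\int_{|y|>1}\mathcal{K}$ omits; you additionally need $\int_{|y|>1}|y|^{2}\mathcal{K}(y)\,dy\to0$, true for the same reason. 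In part (ii), ``cubic Taylor remainder'' presumes more than $C^{1,1}_{loc}$ gives; under the stated hypothesis you only have $R(y)=o(|y|^{2})$ at points of twice differentiability, and the same $\delta$-split is required (on $\delta<|y|_{p}\le1$ the bound $|R|\le C|y|_{p}^{2}$ contributes at most $C(1-\delta^{2-\alpha})\to0$). These are exactly the steps the paper handles by introducing the small cut-off $\epsilon$ and sending $\epsilon\to0^{+}$ at the very end.
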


The paper is organized as follows. In Section 2, we give maximum principle for anti-symmetric functions, decay at infinity and narrow region principle for the linear operator $\mathcal{L}_{K}$, and prove Theorems \ref{th1.1}. In Sections 3, we give maximum principle for anti-symmetric functions and a boundary estimate for the nonlinear operator $F_{G,K}$, and prove Theorems \ref{th1.2}.
In Sections 4, we study the limit of $\mathcal{L}_Ku(x)$ as $\alpha\to2$.

Throughout the paper, we use $C$ to denote positive constants whose values may vary from line to line.

\vskip4mm
{\section{Nonlinear equations $\mathcal{L}_Ku(x)=f(x,u)$}}
 \setcounter{equation}{0}
In this section, we always assume that ($K_1$) and ($K_2$) hold. We give the maximum principle for anti-symmetric functions, decay at infinity and narrow region principle for the nonlocal operator $\mathcal{L}_K$, and then use them to prove Theorem \ref{th1.1} by the direct method of moving planes.
\vskip4mm
{\subsection{Maximum principle for anti-symmetric functions, decay at infinity and narrow region principle for $\mathcal{L}_K$}}

For any real number $\lambda$, let
$$T_\lambda = \{x\in\R^n | x_1 = \lambda\}$$
be a plane perpendicular to $x_1-$axis. Let $\Sigma_\lambda$ be the region to the left of the plane $T_\lambda$
$$\Sigma_\lambda = \{x\in\R^n |x_1 < \lambda\},$$
and
$$x^\lambda = (2\lambda-x_1, x_2,\cdot\cdot\cdot,x_n)$$
be the reflection of the point $x = (x_1, x_2,\cdot\cdot\cdot,x_n)$ about the plane $T_\lambda$.
Denote $u_\lambda(x)=u(x^\lambda)$, $w_\lambda(x)=u(x^\lambda)-u(x)$, and for simplicity of notation, we also denote $w_\lambda$ by $w$ and $\Sigma_\lambda$ by $\Sigma$.

\begin{theorem}\label{t2:2023113} (Maximum principle for anti-symmetric functions) Let $\Omega$ be a bounded
domain in $\Sigma$. Assume that $w\in C^{1,1}_{loc}(\Omega)$ and is lower semi-continuous on $\bar{\Omega}$. If
\begin{equation}\nonumber
\left\{
\begin{array}{ll}
\aligned
&\mathcal{L}_Kw(x)\geq 0,~~&x\in\Omega, \\
&w(x)\geq 0,~~&x\in\Sigma\setminus\Omega,\\
&w(x^\lambda)=-w(x),~~&x\in\Sigma,
\endaligned
\end{array}
\right.
\end{equation}
then $w\geq 0$ in $\Omega$. Moreover, if $w(x) = 0$ for some point inside $\Omega$, then $w\equiv 0$ almost everywhere in $\R^n$. The same conclusions holds for unbounded domains $\Omega$ if we further assume that
$$\liminf\limits_{|x|\to\infty}w(x)\geq0.$$
\end{theorem}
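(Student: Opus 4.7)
The plan is to argue by contradiction. Suppose $w < 0$ somewhere in $\Omega$. In the bounded case, lower semi-continuity on $\bar\Omega$ together with $w \geq 0$ on $\Sigma\setminus\Omega$ forces the infimum $\inf_{\bar\Omega} w$ to be attained at an interior point $x_0 \in \Omega$; in the unbounded case, the hypothesis $\liminf_{|x|\to\infty} w(x) \geq 0$ confines any minimizing sequence to a bounded region, and lower semi-continuity again yields an interior minimizer $x_0 \in \Omega$ with $w(x_0) < 0$.

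At such a point I will compute $\mathcal{L}_K w(x_0)$ and show it is strictly negative, contradicting $\mathcal{L}_K w(x_0) \geq 0$. Splitting the principal-value integral over $\Sigma$ and $\Sigma^c$ and, in the $\Sigma^c$ piece, performing the reflection $y \mapsto y^\lambda$ together with the anti-symmetry $w(y^\lambda) = -w(y)$, I obtain the identity
$$\mathcal{L}_K w(x_0) = C_n \int_\Sigma \Bigl\{ w(x_0)\bigl[K(x_0 - y) + K(x_0 - y^\lambda)\bigr] - w(y)\bigl[K(x_0 - y) - K(x_0 - y^\lambda)\bigr] \Bigr\}\, dy.$$
The key geometric observation is that $x_0 - y$ and $x_0 - y^\lambda$ agree in every coordinate except the first, and $|x_{0,1} - y_1| \leq (\lambda - x_{0,1}) + (\lambda - y_1) = |(x_0 - y^\lambda)_1|$ since $x_{0,1} - \lambda$ and $y_1 - \lambda$ have the same sign; hence by $(K_2)$, $K(x_0 - y) - K(x_0 - y^\lambda) > 0$ a.e.\ in $\Sigma$. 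Replacing $w(y)$ by its lower bound $w(x_0)$ in the second bracket, the identity collapses to
$$\mathcal{L}_K w(x_0) \leq 2 C_n w(x_0) \int_\Sigma K(x_0 - y^\lambda)\, dy < 0,$$
because $w(x_0) < 0$, $K > 0$, and the integral (equal to $\int_{\Sigma^c} K(z-x_0)\,dz$ after substitution) avoids the singularity of $K$ and is finite by the Lévy--Khintchine condition.

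For the rigidity part, assume $w \geq 0$ in $\Sigma$ and $w(x_0) = 0$ at some interior $x_0 \in \Omega$. Substituting $w(x_0) = 0$ into the identity above yields
$$0 \leq \mathcal{L}_K w(x_0) = -C_n \int_\Sigma w(y)\bigl[K(x_0 - y) - K(x_0 - y^\lambda)\bigr]\, dy \leq 0,$$
so the integral vanishes. Since $w \geq 0$ on $\Sigma$ and the kernel difference is positive a.e., we conclude $w \equiv 0$ a.e.\ on $\Sigma$, and anti-symmetry extends this to $w \equiv 0$ a.e.\ on $\R^n$.

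The main technical obstacle is justifying the principal-value splitting and the reflection substitution in the presence of the singularity at $y = x_0$. I will cut out a symmetric ball $B_\varepsilon(x_0)$ before splitting, perform the reflection on $\Sigma^c \setminus B_\varepsilon(x_0^\lambda)$ (whose reflected image lies in $\Sigma$ away from $x_0$), and pass to the limit $\varepsilon \to 0$ using the $C^{1,1}_{loc}$ regularity of $w$ at $x_0$ together with $(K_1)$ and the Lévy--Khintchine bound \eqref{20231103-e100} to secure absolute convergence of the paired integrals. A secondary matter is the attainment of the infimum in the unbounded case, which the $\liminf$ hypothesis supplies, with mild care on the hyperplane $T_\lambda$ where anti-symmetry already forces $w = 0$.
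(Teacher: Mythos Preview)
Your proof is correct and follows essentially the same approach as the paper: the paper writes the splitting as $I_1+I_2$ with $I_1=\int_\Sigma [w(x^o)-w(y)][K(x^o-y)-K(x^o-y^\lambda)]\,dy$ and $I_2=2w(x^o)\int_\Sigma K(x^o-y^\lambda)\,dy$, which is algebraically identical to your displayed identity, and then bounds $I_1\le 0$, $I_2<0$ exactly as you do. Your additional care about justifying the principal-value splitting via an $\varepsilon$-excision and the attainment of the infimum in the unbounded case is more than the paper provides, but the core argument is the same.
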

\begin{proof}
Suppose otherwise, then there exists a point $x^o\in\Omega$ such that
$$w(x^o)=\min\limits_{\Omega}w=\min\limits_{\Sigma}w<0.$$
By dividing $\R^n$ into the sum of $\Sigma$ and $\Sigma^c$, and using integral variable substitution for the integral on $\Sigma^c$, we have
\begin{equation}\label{2023113-e11}
\aligned
&\mathcal{L}_Kw(x^o)\\
= &P.V.\int_{\Sigma}\left[w(x^o)-w(y)\right]\left[K(x^o-y)-K(x^o-y^\lambda)\right]dy +\int_{\Sigma}2w(x^o)K(x^o-y^\lambda)dy\\
=:& I_1+I_2.
\endaligned
\end{equation}
Since $K(z)$ is monotonically decreasing with respect to $|z_1|$ and
\begin{equation*}
\aligned
\left|(x^o-y)_1\right|<\left|(x^o-y^\lambda)_1\right|~\text{for}~x^o, y\in\Sigma,
\endaligned
\end{equation*}
 we can infer that
\begin{equation*}
\aligned
I_1\leq 0.
\endaligned
\end{equation*}
By the positivity of $K(\cdot)$, we have
\begin{equation*}
\aligned
I_2<0.
\endaligned
\end{equation*}
Hence $\mathcal{L}_Kw(x^o)<0$. This contradicts our assumption, hence
\begin{equation}\label{e11:20231103}
w(x)\geq0,\quad\forall x\in\Sigma.
\end{equation}
It follows that if $w(x^o) = 0$ at some point $x\in\Omega$, then $u_\lambda(x^o)=u(x^o)$, hence
\eqref{2023113-e11} holds with $I_2 = 0$. Now, our assumption implies $I_1 \geq 0$. Consequently
$$w(y)\leq 0,\quad\text{almost~everywhere~in~}\Sigma.$$
Combining this with \eqref{e11:20231103},
$$w(y) = 0,\quad \text{almost~everywhere~in~} \Sigma,$$
and from the antisymmetry of $w$, we arrive at
$$w(y) = 0,\quad\text{almost~everywhere~in~} \R^n.$$
\end{proof}

\begin{theorem}\label{t3:2023113} (Decay at infinity) Let $\Omega$ be an unbounded region in $\Sigma$. Let $w\in
C^{1,1}_{loc}(\Omega)\cap L^\infty(\Omega)$ be a solution of
\begin{equation}\label{20231012-e10}
\left\{
\begin{array}{ll}
\aligned
&\mathcal{L}_Kw(x)+c(x)w(x)\geq0,\quad &x\in \Omega, \\
&w(x)\geq0,\quad &x\in\Sigma\setminus\Omega,\\
&w(x^\lambda)=-w(x),\quad &x\in\Sigma
\endaligned
\end{array}
\right.
\end{equation}
with
\begin{equation}\nonumber
\liminf\limits_{|x|\to\infty}|x|^\alpha c(x)\geq0,
\end{equation}
then there exists a constant $R_0 > 0$ ( depending on $c(x)$, but independent of $w$), such that if
$$w(x^0) = \min\limits_{\Omega}w(x) < 0,$$
then
$|x^0| \leq R_0.$
\end{theorem}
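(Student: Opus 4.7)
The plan is a contradiction argument. Suppose that the minimum is attained at some $x^0$ with $w(x^0)<0$ and $|x^0|$ arbitrarily large; I will derive a pointwise inequality of the form $|x^0|^\alpha c(x^0)\leq -C$ for an absolute constant $C>0$, which combined with $\liminf_{|x|\to\infty}|x|^\alpha c(x)\geq 0$ gives a contradiction, forcing $|x^0|\leq R_0$ for some $R_0$ depending only on $c$ and $\lambda$. The mechanism mirrors that of Theorem \ref{t2:2023113}: I split $\R^n=\Sigma\cup\Sigma^c$ in $\mathcal{L}_Kw(x^0)$ and convert the integral over $\Sigma^c$ into one over $\Sigma$ through the reflection $y\mapsto y^\lambda$, using $w(y^\lambda)=-w(y)$.

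This yields $\mathcal{L}_Kw(x^0)=I_1+I_2$, where
\[
I_1=C_n\,\text{P.V.}\!\int_\Sigma\bigl(w(x^0)-w(y)\bigr)\bigl(K(x^0-y)-K(x^0-y^\lambda)\bigr)dy,\quad I_2=2C_n w(x^0)\!\int_\Sigma K(x^0-y^\lambda)\,dy.
\]
Since $x^0$ is the minimum of $w$ over $\Omega$ and $w\geq 0$ on $\Sigma\setminus\Omega$, the factor $w(x^0)-w(y)$ is nonpositive on $\Sigma$; the strict kernel monotonicity $(K_2)$ together with the elementary fact $|(x^0-y)_1|<|(x^0-y^\lambda)_1|$ for $x^0,y\in\Sigma$ makes the kernel difference nonnegative, so $I_1\leq 0$. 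Substituting into the hypothesis $\mathcal{L}_Kw(x^0)+c(x^0)w(x^0)\geq 0$ and dividing by $-w(x^0)>0$ yields
\[
-c(x^0)\geq 2C_n\int_\Sigma K(x^0-y^\lambda)\,dy.
\]

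To finish I need a lower bound of the form $\int_\Sigma K(x^0-y^\lambda)dy\geq C(2-\alpha)|x^0|^{-\alpha}$. Applying $(K_1)$ and changing variables $z=y^\lambda$ (unit Jacobian, bijecting $\Sigma$ with $\Sigma^c$), then translating $\tilde w=z-x^0$, reduces this to estimating $\int_{\{\tilde w_1>\lambda-x^0_1\}}|\tilde w|^{-n-\alpha}d\tilde w$. An anisotropic substitution $\tilde w'=\tilde w_1 u$ on each slice integrates the inner $(n-1)$-dimensional piece to a finite dimensional constant and produces an explicit value of the form $C_n(\lambda-x^0_1)^{-\alpha}/\alpha$. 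Since $x^0\in\Sigma$ one has $0<\lambda-x^0_1\leq|\lambda|+|x^0|\leq 2|x^0|$ once $|x^0|\geq|\lambda|$, giving the desired bound and therefore $|x^0|^\alpha c(x^0)\leq -C(2-\alpha)$, which contradicts the hypothesis on $c$ once $|x^0|$ is large enough that $|x|^\alpha c(x)\geq -C(2-\alpha)/2$. The step that will require the most care is this last geometric integral: I need to verify that the dimensional constant emerging from the half-space integration is genuinely uniform in the location of $x^0$ and independent of $w$, since the whole point of the statement is that $R_0$ depends only on the decay profile of $c$. Once that uniformity is in hand, the argument closes cleanly.
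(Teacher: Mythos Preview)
Your proposal is correct and follows essentially the same path as the paper: the identical antisymmetry splitting into $I_1+I_2$, the same use of $(K_2)$ to discard $I_1$, and the same reduction to a lower bound on $\int_\Sigma K(x^0-y^\lambda)\,dy$ via $(K_1)$, leading to $|x^0|^\alpha c(x^0)\leq -C$. The only difference is cosmetic---the paper obtains the bound $C|x^0|^{-\alpha}$ by citing \cite{CLL} (whose argument places a ball of radius $|x^0|$ inside $\Sigma^c$ rather than integrating the full half-space as you do), and the paper additionally treats the exponential-decay kernel $\mathcal{K}$, which fails $(K_1)$ and therefore lies outside the scope of your computation.
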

\begin{proof}
Suppose otherwise, then there exists a point $x^o\in\Omega$ such that
$$w(x^o)=\min\limits_{\Omega}w=\min\limits_{\Sigma}w<0.$$
By condition ($K_2$) and \begin{equation*}
\aligned
\left|(x^o-y)_1\right|<\left|(x^o-y^\lambda)_1\right|~\text{for}~x^o, y\in\Sigma,
\endaligned
\end{equation*}
we have
$$K(x^o-y)\geq K(x^o-y^\lambda).$$
Then by $w(y^\lambda)=-w(y)$, we have
\begin{equation}\label{20231012-e112}
\aligned
&\mathcal{L}_Kw(x^o)\\
=&C_{n}\text{P.V.}\int_{\R^n}(w(x^o)-w(y))K(x^o-y)dy \\
=&C_{n}\text{P.V.}\int_{\Sigma}\left[(w(x^o)-w(y))K(x^o-y)+(w(x^o)+w(y))K(x^o-y^\lambda)\right]dy \\
\leq & 2C_{n}w(x^o)\int_{\Sigma}K(x^o-y^\lambda)dy.
\endaligned
\end{equation}

{\bf The general kernel case.} By condition ($K_1$), we have
$$\int_{\Sigma}K(x^o-y^\lambda)dy\geq\int_{\Sigma}\frac{a}{|x^o-y^\lambda|^{n+\alpha}}dy,$$
which together with \eqref{20231012-e112} implies that
\begin{equation*}
\aligned
\mathcal{L}_Kw(x^o)+c(x^o)w(x^o)
\leq  \left[2C_n a\int_{\Sigma}\frac{1}{|x^o-y^\lambda|^{n+\alpha}}dy+c(x^o)\right]w(x^o).
\endaligned
\end{equation*}
Therefore, following the proof of Theorem 2.4 in \cite{CLL}, we have
\begin{equation}\nonumber
\aligned
\mathcal{L}_Kw(x^o)+c(x^o)w(x^o)
\leq  \left[C\frac{1}{|x^o|^\alpha}+c(x^o)\right]w(x^o).
\endaligned
\end{equation}
Then from the equation \eqref{20231012-e10}, we obtain
$$C+|x^o|^\alpha c(x^o)\leq0.$$
Now if $|x^o|$ is sufficiently large, this would contradict the decay assumption on $c(x)$.

{\bf The exponential decay kernel case:}
Let $\Sigma^c=\R^n\setminus\Sigma$, $x^1=(3|x^o|+x^o_1, (x^o)')$, then
$B_{|x^o|}(x^1)\subset\Sigma^c$ and
$$|x^o-y|\leq 4|x^o|,\quad\forall y\in B_{|x^o|}(x^1).$$
Then
\begin{equation*}
\aligned
\int_{\Sigma}\mathcal{K}(x^o-y^\lambda)dy =&\int_{\Sigma^c}\mathcal{K}(x^o-y)dy\\
=&\frac{1}{\Gamma(\frac{2-\alpha}{2})}\int_{\Sigma^c}\frac{e^{-|x^o-y|^2}}{|x^o-y|^{n+\alpha}}dy \\
\geq &\frac{1}{\Gamma(\frac{2-\alpha}{2})}\int_{B_{|x^o|}(x^1)}\frac{e^{-|x^o-y|^2}}{|x^o-y|^{n+\alpha}}dy\\
\geq &\frac{1}{\Gamma(\frac{2-\alpha}{2})}\int_{B_{|x^o|}(x^1)}\frac{e^{-16|x^o|^2}}{4^{n+\alpha}|x^o|^{n+\alpha}}dy\\
= &C\frac{e^{-16|x^o|^2}}{|x^o|^\alpha}.
\endaligned
\end{equation*}
Therefore
\begin{equation*}
\aligned
\mathcal{L}_Kw(x^o)+c(x^o)w(x^o)
\leq  \left[C\frac{e^{-16|x^o|^2}}{|x^o|^\alpha}+c(x^o)\right]w(x^o).
\endaligned
\end{equation*}
Then from the equation \eqref{20231012-e10}, we obtain
$$C+e^{16|x^o|^2}|x^o|^\alpha c(x^o)\leq0.$$
Now if $|x^o|$ is sufficiently large, this would contradict the decay assumption on $c(x)$.

The proof of the theorem is complete.
\end{proof}

\begin{theorem}\label{t4:20230921} (Narrow region principle) Let $\Omega$ be a bounded narrow region in $\Sigma$, and
$$\Omega\subset\{x| \lambda-\delta < x_1 < \lambda \}$$
with small $\delta$. Suppose that $w\in
C^{1,1}_{loc}(\Omega)\cap L^\infty(\Omega)$ and is lower semi-continuous on $\bar{\Omega}$. If $c(x)$ is
bounded from below in $\Omega$ and
\begin{equation}\label{20231012-e100}
\left\{
\begin{array}{ll}
\aligned
&\mathcal{L}_Kw(x)+c(x)w(x)\geq0,\quad &x\in \Omega, \\
&w(x)\geq0,\quad &x\in\Sigma\setminus\Omega,\\
&w(x^\lambda)=-w(x),\quad &x\in\Sigma,
\endaligned
\end{array}
\right.
\end{equation}
then for sufficiently small $\delta$, we have
$$w(x) \geq 0,\quad x\in \Omega.$$
Furthermore, if $w = 0$ at some point in $\Omega$, then
$$w(x) = 0~\text{a.e.~in~} \R^n.$$
These conclusions hold for unbounded region $\Omega$ if we further assume that
$$\liminf\limits_{|x|\to\infty}
w(x) \geq 0.$$
\end{theorem}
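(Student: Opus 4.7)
The plan is to mimic the argument of Theorem \ref{t3:2023113}, replacing the decay-at-infinity trick by exploiting the narrowness of $\Omega$ to generate the required blow-up. Suppose for contradiction that $w$ is negative somewhere in $\Omega$, and pick a point $x^o$ where $w$ attains its negative minimum on $\bar{\Sigma}$; in the bounded case this exists by lower semi-continuity on $\bar{\Omega}$, while in the unbounded case the $\liminf$ hypothesis secures existence. The assumptions $w\ge 0$ on $\Sigma\setminus\Omega$ and $w=0$ on $T_\lambda$ (from anti-symmetry) place $x^o$ in $\Omega$. Using $w(y^\lambda)=-w(y)$ together with the monotonicity $(K_2)$ and $|(x^o-y)_1|<|(x^o-y^\lambda)_1|$ for $y\in\Sigma$, the same splitting as in \eqref{20231012-e112} reduces the problem to
$$\mathcal{L}_K w(x^o)\le 2C_n w(x^o)\int_{\Sigma^c}K(x^o-z)\,dz.$$

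For the general kernel, $(K_1)$ gives the pointwise bound $K\ge (2-\alpha)c|\cdot|^{-n-\alpha}$, so with $d:=\lambda-x^o_1\in(0,\delta)$ the rescaling $z=x^o+dv$ yields
$$\int_{\Sigma^c}K(x^o-z)\,dz\ge (2-\alpha)c\,d^{-\alpha}\int_{\{v_1>1\}}\frac{dv}{|v|^{n+\alpha}}\ge C_0\delta^{-\alpha},$$
with $C_0$ depending only on $n,\alpha,c$. Plugging this into the differential inequality, and using $w(x^o)<0$ together with the lower bound $c(x)\ge -M$ on $\Omega$, produces
$$0\le (C_0\delta^{-\alpha}+c(x^o))\,w(x^o),$$
which forces $C_0\delta^{-\alpha}\le M$, a contradiction once $\delta$ is sufficiently small. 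For the exponential-kernel case $\mathcal{K}$, the ambient boundedness of $\Omega$ keeps $|x^o-z|$ bounded on a fixed ball around $x^o$ contained in $\Sigma^c$; restricting the integral to that ball drops the Gaussian factor $e^{-|x^o-z|^2}$ as a uniform positive constant, and the $d^{-\alpha}$ scaling of the remaining polynomial integrand runs as before.

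For the rigidity statement, suppose $w(x^o)=0$ at some $x^o\in\Omega$. Then the term $I_2$ in \eqref{2023113-e11} vanishes, and since the integrand of $I_1$ is pointwise $\le 0$ on $\Sigma$ (by $(K_2)$ combined with the minimum property $w(y)\ge w(x^o)=0$) while its total integral is forced to be $\ge 0$, it must vanish almost everywhere; the strict monotonicity in $(K_2)$ then gives $w=0$ a.e.\ in $\Sigma$, and anti-symmetry extends this to a.e.\ $\mathbb{R}^n$. The main technical delicacy is the exponential-kernel step: the lower bound must be localized to a ball of $\delta$-independent radius before rescaling, so the Gaussian factor is not allowed to cancel the $\delta^{-\alpha}$ blow-up that drives the contradiction.
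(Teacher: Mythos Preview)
Your argument follows the paper's approach exactly: argue by contradiction at a negative minimum $x^o$, use anti-symmetry and $(K_2)$ to reach $\mathcal{L}_K w(x^o)\le 2C_n w(x^o)\int_{\Sigma^c}K(x^o-z)\,dz$, then lower-bound the kernel integral by $C\delta^{-\alpha}$ via $(K_1)$ to contradict the lower bound on $c$; the rigidity clause is handled just as in Theorem~\ref{t2:2023113}. Your treatment of the exponential kernel (crudely bounding $e^{-|x^o-z|^2}\ge e^{-R^2}$ on a fixed-radius region and then rescaling) is in fact simpler than the paper's explicit box computation with the inequality $e^{-s^2(1+t^2)}\ge e^{-s^4/2}e^{-(1+t^2)^2/2}$, but watch your wording: no ball \emph{around} $x^o$ can lie in $\Sigma^c$ since $x^o\in\Sigma$---you mean $B_R(x^o)\cap\Sigma^c$, and the bound on $|x^o-z|$ there comes from the fixed radius $R$, not from the boundedness of $\Omega$ (which is important, since the theorem also covers unbounded $\Omega$).
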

\begin{proof}
Suppose otherwise, then there exists a point $x^o\in\Omega$ such that
$$w(x^o)=\min\limits_{\Omega}w=\min\limits_{\Sigma}w<0.$$
Similar to \eqref{20231012-e112},
\begin{equation}\nonumber
\aligned
\mathcal{L}_Kw(x^o)+c(x^o)w(x^o)\leq \left[2C_{n}\int_{\Sigma}K(x^o-y^\lambda)dy+c(x^o)\right]w(x^o).
\endaligned
\end{equation}

{\bf The general kernel case.}
By ($K_1$) and using the same argument of $(25)$ in \cite[Theorem 2.3]{CLL} (also see \cite{chen2017direct,cheng2017maximum,berestycki1991method,berestycki1988monotonicity}), we have
$$\int_{\Sigma}K(x^o-y^\lambda)dy\geq\int_{\Sigma}\frac{(2-\alpha)c}{|x^o-y^\lambda|^{n+\alpha}}dy\geq C\frac{1}{\delta^\alpha},$$
and thus
\begin{equation*}
\aligned
\mathcal{L}_Kw(x^o)+c(x^o)w(x^o)
\leq  \left[C\frac{1}{\delta^\alpha}+c(x^o)\right]w(x^o).
\endaligned
\end{equation*}
Then from the equation \eqref{20231012-e100}, we obtain
$$C\frac{1}{\delta^\alpha}+c(x^o)\leq0.$$
Now if $\delta$ is sufficiently small, this would contradict that $c(x)$ is
bounded from below in $\Omega$.

{\bf The exponential decay kernel case:}
Let
$$D = \{y|\delta < y_1 - x^0_1<1,~|y'-(x^o)'|<1\},$$
then
$D\subset \Sigma^c$.
Letting $s=y_1-x^o_1$ and $\tau=|y'-(x^o)'|$, then
\begin{equation*}
\aligned
\int_{\Sigma}\mathcal{K}(x^o-y^\lambda)dy =&\int_{\Sigma^c}\mathcal{K}(x^o-y)dy\\
=&\frac{1}{\Gamma(\frac{2-\alpha}{2})}\int_{\Sigma^c}\frac{e^{-|x^o-y|^2}}{|x^o-y|^{n+\alpha}}dy \\
\geq &\frac{1}{\Gamma(\frac{2-\alpha}{2})}\int_{D}\frac{e^{-|x^o-y|^2}}{|x^o-y|^{n+\alpha}}dy\\
= &\frac{1}{\Gamma(\frac{2-\alpha}{2})}\int_{\delta}^1\int^1_{0}\frac{\omega_{n-2}\tau^{n-2}e^{-s^2(1+t^2)}}{(s^2+\tau^2)^{\frac{n+\alpha}{2}}}d\tau ds\\
= &\frac{1}{\Gamma(\frac{2-\alpha}{2})}\int_{\delta}^1\int^{\frac{1}{s}}_{0}\frac{\omega_{n-2}(st)^{n-2}e^{-s^2(1+t^2)}}{s^{n+\alpha}(1+t^2)^{\frac{n+\alpha}{2}}}sdt ds,
\endaligned
\end{equation*}
where we use the variable substitution $t=\frac{\tau}{s}.$
Then by the elementary inequality
$$e^{-s^2(1+t^2)}\geq e^{-\frac{s^4}{2}}e^{-\frac{(1+t^2)^2}{2}},$$
we get
\begin{equation*}
\aligned
&\int_{\Sigma}\mathcal{K}(x^o-y^\lambda)dy\\
\geq &\frac{1}{\Gamma(\frac{2-\alpha}{2})}\int_{\delta}^1\int^{\frac{1}{s}}_{0}\frac{\omega_{n-2}(st)^{n-2}e^{-\frac{s^4}{2}}e^{-\frac{(1+t^2)^2}{2}}}{s^{n+\alpha}(1+t^2)^{\frac{n+\alpha}{2}}}sdtds\\
=&C(n,\alpha)\int_{\delta}^1 s^{-1-\alpha}e^{-\frac{s^4}{2}}\left[\int^{\frac{1}{s}}_{0}
\frac{t^{n-2}e^{-\frac{(1+t^2)^2}{2}}}{(1+t^2)^{\frac{n+\alpha}{2}}}dt\right]ds \\
\geq&C(n,\alpha)\int_{\delta}^1 s^{-1-\alpha}e^{-\frac{s^4}{2}}ds\int^{1}_{0}
\frac{t^{n-2}e^{-\frac{(1+t^2)^2}{2}}}{(1+t^2)^{\frac{n+\alpha}{2}}}dt\\
\geq&C'(n,\alpha)e^{\frac{1}{2}}\int_{\delta}^1 s^{-1-\alpha}ds\geq C''(n,\alpha)\frac{1}{\delta^\alpha}.
\endaligned
\end{equation*}
Then by the same argument in the general kernel case, we get a contradiction.

The proof of the theorem is complete.
\end{proof}

\vskip4mm
{\subsection{Symmetry and monotonicity in a unit ball}}

In this subsection, we prove Theorem \ref{th1.1}-(i).
\begin{proof}
Let $$\Omega_\lambda=\Sigma_\lambda\cap B_1(0).$$
By equation \eqref{20231012-e311} we have
\begin{equation*}
\mathcal{L}_Kw_\lambda(x)+c_\lambda(x)w_\lambda(x)=0,\quad\text{where}~c_\lambda(x)=-\frac{ f(u_\lambda(x))-f(u(x))}{u_\lambda(x)-u(x)}.
\end{equation*}

{\bf Step 1.} Choose any ray from the origin as
the positive $x_1$ direction. We show that for $\lambda>-1$ but sufficiently close to $-1$, there holds
\begin{equation}\label{e22:2023113}
w_\lambda(x)\geq0,\quad\forall x\in\Omega_\lambda.
\end{equation}
Indeed, the Lipschitz continuity condition on $f$ guarantees that $c_\lambda(x)$ is uniformly bounded
from below. Then by Theorem \ref{t4:20230921} (Narrow region principle), for
 $\lambda>-1$ and sufficiently close to $-1$, \eqref{e22:2023113} holds
since $\Sigma_\lambda$ is a narrow region for such $\lambda$.

{\bf Step 2.} Step 1 provides a starting point to move the plane $T_\lambda$. Now we move the plane
to the right as long as \eqref{e22:2023113} holds to its limiting position. More precisely, define
$$\lambda_o= \sup\{\lambda\leq 0 | w_\mu(x) \geq 0, x\in \Omega_\mu, \mu\leq\lambda\}.$$
We show that
\begin{equation*}
\lambda_o = 0.
\end{equation*}
Suppose in the contrary, $\lambda_o<0$, then by Theorem \ref{t2:2023113} (Maximum principle for anti-symmetric functions), we have
 \begin{equation*}
 w_{\lambda_o}(x)>0,~~\forall x\in\Sigma_{\lambda_o}.
 \end{equation*}
Thus for any $\delta> 0$,
$$w_{\lambda_o} (x) > c_\delta > 0,\quad\forall x\in \Sigma_{\lambda_o-\delta}.$$
By the continuity of $w_\lambda$ with respect to $\lambda$, there exists $\epsilon > 0$, such that
 \begin{equation}\label{e28:2023113}
w_\lambda(x)\geq 0, \quad \forall x\in \Sigma_{\lambda_o-\delta},\quad \forall\lambda\in [\lambda_o, \lambda_o+ \epsilon).
 \end{equation}
Using Theorem \ref{t4:20230921} (Narrow region principle), we have
$$w_\lambda(x) \geq 0,\quad \forall x\in \Sigma_\lambda\setminus \Sigma_{\lambda_o-\delta}.$$
This together with \eqref{e28:2023113} implies
$$w_\lambda(x) \geq 0, \quad \forall x\in\Sigma_\lambda, \quad \forall \lambda\in[\lambda_o, \lambda_o + \epsilon).$$
This contradicts the definition of $\lambda_o$. Therefore, we must have $\lambda_o = 0.$ It follows that
$$w_0(x) \geq 0,  \quad \forall x \in \Sigma_0.$$
Since we can choose the $x_1-$direction arbitrarily, hence $u$ is radially symmetric about
the origin. The monotonicity is a consequence of the fact that
$$w_\lambda(x) > 0,  \quad \forall x \in \Sigma_\lambda,\quad\forall \lambda\in(-1,0].$$
 This completes the proof of Theorem \ref{th1.1}-(i).
\end{proof}

\vskip4mm
{\subsection{Symmetry and monotonicity in $\R^n$ }}

In this subsection, we prove Theorem \ref{th1.1}-(ii).
\begin{proof} By equation \eqref{20231012-e321} and mean value theorem,
\begin{equation*}
\mathcal{L}_Kw_\lambda(x)+c_\lambda(x)w_\lambda(x)=0, \quad\text{where}~c_\lambda(x)=-g'(\psi_\lambda(x)),
\end{equation*}
where $\psi_\lambda(x)$ is between $u_\lambda(x)$ and $u(x)$.

{\bf Step 1}. We show that for $\lambda$ sufficiently negative,
\begin{equation}\label{e32:2023113}
w_\lambda(x)\geq0,\quad\forall x\in\Sigma_\lambda.
\end{equation}
Suppose \eqref{e32:2023113} is violated, then there exists an $x^o\in\Sigma_\lambda$, such that
$$w_\lambda(x^o) = \min\limits_{\Sigma_\lambda}w_\lambda< 0.$$
Since $\lambda$ is sufficiently negative, thus $|x^o|$ is sufficiently large.
Since
$u_\lambda(x^o)< u(x^o),$
we have
$$0 \leq u_\lambda(x^o)\leq\psi_\lambda(x^o)\leq u(x^o).$$
The decay assumptions of $u(x)$ and $g'$ imply that
$$|x^o|^\alpha c_\lambda(x^o)\geq0,\quad\text{for}~|x^o|~\text{sufficiently~large}.$$
However, by the same argument of Theorem \ref{t3:2023113} (Decay at infinity), there exists $R_0 > 0$, such that, if $x^o$ is a negative minimum of $w_\lambda$ in $\Sigma_\lambda$, then
$$|x^o| \leq R_0.$$
This contradicts that $|x^o|$ is sufficiently large. Hence \eqref{e32:2023113} must hold.

{\bf Step 2.}
\eqref{e32:2023113} provides a starting point, from which we move the plane $T_\lambda$
toward the right as long as \eqref{e32:2023113} holds to its limiting position to prove that $u$ is symmetric
about the limiting plane. More precisely, let
$$\lambda_o = \sup\{\lambda | w_\mu(x) \geq 0, x\in\Sigma_\mu, \mu\leq\lambda\},$$
we show that $u$ is symmetric about the limiting plane $T_{\lambda_o}$, i.e.
 \begin{equation}\label{e36:2023113}
w_{\lambda_o}(x)\equiv0,~~\forall x\in\Sigma_{\lambda_o}.
\end{equation}
Suppose \eqref{e36:2023113} is false, then by Theorem \ref{t2:2023113} (Maximum principle for anti-symmetric functions),
 \begin{equation*}
 w_{\lambda_o}(x)>0,~~\forall x\in\Sigma_{\lambda_o}.
 \end{equation*}
It follows that for any positive number $\rho$,
$$w_{\lambda_o}(x)\geq c_0 > 0,\quad\forall x \in \overline{\Sigma_{\lambda_o-\rho}\cap B_{R_0}(0)},$$
where $R_0$ is defined in Step 1. Since $w_\lambda$ depends on $\lambda$ continuously, there exists
a $\delta_0 > 0$ such that for all $\delta\in(0, \delta_0)$,
 \begin{equation*}
 w_{\lambda_o+\delta}(x)\geq  0,\quad\forall x \in \overline{\Sigma_{\lambda_o-\rho}\cap B_{R_0}(0)},
  \end{equation*}
  Now, we can show that
   \begin{equation}\label{e39:2023113}
 w_{\lambda_o+\delta}(x)\geq  0, \quad\forall x \in \Sigma_{\lambda_o+\delta}.
  \end{equation}
Suppose \eqref{e39:2023113} is false, then there exists $x^o\in \Sigma_{\lambda_o+\delta}$, such that
$$ w_{\lambda_o+\delta}(x^o) = \min\limits_{\Sigma_{\lambda_o+\delta}}w_{\lambda_o+\delta} < 0.$$
By Theorem \ref{t3:2023113} (Decay at infinity), there must hold
\begin{equation}\label{e40:2023113}
x^o \in (\Sigma_{\lambda_o+\delta}\setminus \Sigma_{\lambda_o-\rho} ) \cap B_{R_0}(0).
\end{equation}
Since $(\Sigma_{\lambda_o+\delta}\setminus \Sigma_{\lambda_o-\rho} ) \cap B_{R_0}(0)$ is a narrow region for sufficiently small $\delta$ and $\rho$,
and by Theorem \ref{t4:20230921} (Narrow region principle), $w_{\lambda_o+\delta}$ cannot attain its negative minimum
here, which contradicts \eqref{e40:2023113}.
 Hence \eqref{e39:2023113} holds. Thus the plane $T_{\lambda_o}$ can still be moved further to the right, which contradicts with the definition of $\lambda_o$. Therefore, \eqref{e36:2023113} hold.

 Since $x_1$ direction can be chosen arbitrarily, we conclude that $u$ is radially symmetric
about some point.
This completes Theorem \ref{th1.1}-(ii).
\end{proof}

\vskip4mm
{\subsection{Non-existence of solutions on a half space }}
In this subsection, we prove Theorem \ref{th1.1}-(iii).
\begin{proof}
First, for $u(x)\geq0$ we show that
$$\text{either}~ u(x) > 0 ~~\text{or}~~ u(x) \equiv 0, \quad x\in \R^n_+.$$
 Indeed, if there exists $\tilde{x}\in\R^n$ such that $u(\tilde{x})=0$.
By equation \eqref{20231013-e1}, we have
$$\int_{\R^n}(u(\tilde{x})-u(y))K(\tilde{x}-y)dy=h(u(\tilde{x}))=h(0)=0.$$
On the other hand, by the strict positivity of $K(\cdot)$,
$$\int_{\R^n}(u(\tilde{x})-u(y))K(\tilde{x}-y)dy=-\int_{\R^n}u(y)K(\tilde{x}-y)dy\leq 0.$$
This implies that
$$u(y)\equiv0,\quad\text{a.e.~on}~\R^n.$$
Hence in the following, we may assume that
$$u(x) > 0, \quad x\in \R^n_+.$$

Now we carry on the method of moving planes on the solution $u$ along $x_n$ direction.
Let
$$T_\lambda = \{x\in \R^n_+ | x_n = \lambda\},\quad \lambda> 0,$$
and
$$\Sigma_\lambda = \{x\in \R^n_+ | 0 < x_n < \lambda\}.$$
Let
$$x^\lambda = (x_1,..., x_{n-1}, 2\lambda- x_n)$$
be the reflection of $x$ about the plane $T_\lambda$. Denote $w_\lambda(x) = u(x^\lambda) - u(x).$
By \eqref{20231013-e1}, we see that $w_\lambda(x)$ satisfies the following equation
\begin{equation*}
\left\{
\begin{array}{ll}
\aligned
&\mathcal{L}_Kw_\lambda(x)+c_\lambda(x)w_\lambda(x)=0,\quad &x\in \R^n_+, \\
&w_\lambda(x^\lambda)=-w_\lambda(x),\quad &x\in\R^n_+,
\endaligned
\end{array}
\right.
\end{equation*}
where
$$c_\lambda(x)=-\frac{h(u_\lambda(x))-h(u(x))}{u_\lambda(x)-u(x)}$$
is bounded from below since $h(\cdot)$ is Lipschitz continuous.

{\bf Step 1}. For $\lambda$ sufficiently small, since $\Sigma_\lambda$ is a narrow region, by using the same proof of Theorem \ref{t4:20230921} (Narrow region principle), we have
\begin{equation}\label{20231014-e11} w_\lambda(x) \geq 0,\quad \forall x\in\Sigma_\lambda. \end{equation}

{\bf Step 2.}
Let
$$\lambda_o = \sup\{\lambda | w_\mu(x) \geq 0, x\in\Sigma_\mu, \mu\leq\lambda\},$$
We show that
\begin{equation}\label{20231014-e12} \lambda_o = +\infty. \end{equation}
Otherwise, if $\lambda_o < +\infty$, then by \eqref{20231014-e11}, combining Theorem \ref{t4:20230921} (Narrow region principle) and Theorem \ref{t3:2023113} (Decay at infinity) and going through the similar arguments as in the previous subsection, we are able
to show that
$$w_{\lambda_o} (x) \equiv 0\quad\text{in}~ \Sigma_{\lambda_o},$$
which is impossible, since $0=u(x)=u(x^\lambda) > 0$ for $x\in\partial\R^n_+$.

Therefore, \eqref{20231014-e12} holds. Consequently, the solution $u(x)$ is monotone increasing
with respect to $x_n$. This contradicts \eqref{20231014-e1}. So $u \equiv 0.$
\end{proof}

\vskip4mm
{\section{ Full nonlinear equations $F_{G,K} u(x)=f(u(x))$}}
 \setcounter{equation}{0}
In this section, we always suppose that ($K_1$) and ($K'_2$) hold. We give the maximum principle for anti-symmetric functions, and a boundary estimate for the nonlinear nonlocal operator $F_{G,K}$, and then use them to prove Theorem \ref{th1.2} by the direct method of moving planes.

The simple maximum principle for $F_{G,K}$ is not necessary in the proof of Theorem \ref{th1.2}. However, due to its interest in itself, we give the proof in the following, which holds also for $L_{K}$, since only the monotonicity of $G(\cdot)$ is used.
\begin{theorem}\label{t1:20230921} (Simple maximum principle) Let $\Omega\subset\R^n$ be a bounded domain in $\R^n$, and let $u\in C^{1,1}_{loc}(\Omega)$ be a lower-semi-continuous
function in $\bar{\Omega}$ such that
\begin{equation}\label{e1:20230922}
\left\{
\begin{array}{ll}
\aligned
&F_{G,K}(u(x))\geq 0,~~x\in\Omega, \\
&~u \geq 0,\quad x\in\R^n\setminus\Omega.
\endaligned
\end{array}
\right.
\end{equation}
Then
\begin{equation}\label{e2:20230922}
u \geq 0,~~x\in\Omega.
\end{equation}
The same conclusions holds for unbounded domains $\Omega$ if we further assume that
$$\liminf\limits_{|x|\to\infty}u(x)\geq0.$$
\end{theorem}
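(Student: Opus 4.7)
The plan is to argue by contradiction, reducing the nonlinear statement to the elementary fact that, at a negative minimum of $u$, every integrand $G(u(x^0)-u(y))$ is non-positive and some of them are strictly negative. First I would assume, for contradiction, that $u$ takes a negative value somewhere in $\Omega$, and produce a point $x^0\in\Omega$ at which $u$ attains its global infimum over $\R^n$. For the bounded-domain case this is standard: $\bar\Omega$ is compact, $u$ is lower semi-continuous on $\bar\Omega$, and the condition $u\geq 0$ on $\R^n\setminus\Omega$ forces the minimum to be interior and strictly negative. For the unbounded case I would use the hypothesis $\liminf_{|x|\to\infty}u(x)\geq 0$ together with lower semi-continuity to localize: outside a sufficiently large ball $B_R$, $u>\tfrac{1}{2}u(x^*)$ (where $x^*$ witnesses $u<0$), so any minimum is attained in $\bar\Omega\cap\bar B_R$, which is compact.

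Next, at such a point $x^0$ I would evaluate $F_{G,K}(u)(x^0)$. Because $x^0$ is a global minimum, $u(x^0)-u(y)\leq 0$ for every $y\in\R^n$, and moreover this inequality is strict on $\R^n\setminus\Omega$ since there $u(y)\geq 0>u(x^0)$. Invoking only the monotonicity of $G$ from $(G_1)$ together with $G(0)=0$, one gets $G(u(x^0)-u(y))\leq 0$ for all $y$ and $G(u(x^0)-u(y))<0$ on $\R^n\setminus\Omega$. Combined with the pointwise positivity of $K$, integrating yields
\[
F_{G,K}(u)(x^0)=C_{n,\alpha}\,\text{P.V.}\int_{\R^n}G(u(x^0)-u(y))K(x^0-y)\,dy<0,
\]
which contradicts $F_{G,K}(u)(x^0)\geq 0$ in \eqref{e1:20230922}. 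This establishes \eqref{e2:20230922}.

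A minor technical point is that the principal value must be well defined at $x^0$. Since $x^0\in\Omega$ and $u\in C^{1,1}_{\mathrm{loc}}(\Omega)$, near $y=x^0$ one has $|u(x^0)-u(y)|\lesssim |x^0-y|^2$, so $|G(u(x^0)-u(y))|$ is controlled by the modulus of continuity of $G$ at $0$, and the L\'evy--Khintchine condition on $K$ together with the non-positivity of the integrand makes the principal value a genuine (possibly $-\infty$) integral; in particular the strict negativity on the outside region forces the value to be strictly negative and bounded away from $0$. I expect the main obstacle to be only a careful handling of this principal-value issue and of the attainment of the infimum in the unbounded case; the underlying moving-planes/minimum-point mechanism itself is completely standard, and no quantitative information on $G$ beyond its monotonicity and odd symmetry is required, which is precisely why the same proof carries over to $\mathcal{L}_K$.
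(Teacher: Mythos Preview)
Your proposal is correct and follows essentially the same argument as the paper: assume a negative interior minimum $x^0$, use $u(x^0)\le u(y)$ globally (with strict inequality on $\R^n\setminus\Omega$) together with the monotonicity of $G$ and $G(0)=0$ to conclude $F_{G,K}(u)(x^0)<0$, contradicting \eqref{e1:20230922}. You supply more detail than the paper on attaining the infimum in the unbounded case and on the well-definedness of the principal value, but the underlying mechanism is identical.
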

\begin{proof}
Suppose \eqref{e2:20230922} is violated, then there exists $x^o\in\Omega$
such that
$$u(x^o) = \min\limits_{\Omega} u < 0.$$
Since $u \geq 0$ for all $x\in\R^n\setminus\Omega$, then
$$u(x^o)<u(y),\quad \forall y\in \R^n\setminus\Omega.$$
By the monotonicity of $G(\cdot)$ and $G(0)=0$, we have
$$\int_{\R^n}G(u(x^o)-u(y))K(x^o-y)dy<0.$$
This contradicts \eqref{e1:20230922} and hence proves the theorem.
\end{proof}
\vskip4mm
{\subsection{Maximum principle for anti-symmetric functions and a boundary estimate }}

\begin{theorem}\label{t2:20230921} (Maximum principle for anti-symmetric functions) Let $\Omega$ be a bounded
domain in $\Sigma$. Assume that $w\in C^{1,1}_{loc}(\Omega)$ and is lower semi-continuous on $\bar{\Omega}$. If
\begin{equation}\nonumber
\left\{
\begin{array}{ll}
\aligned
&F_{G,K}u_\lambda(x)-F_{G,K}u(x)\geq 0,~~&x\in\Omega, \\
&w(x)\geq 0,~~&x\in\Sigma\setminus\Omega,\\
&w(x^\lambda)=-w(x),~~&x\in\Sigma,
\endaligned
\end{array}
\right.
\end{equation}
then $w\geq 0$ in $\Omega$. Moreover, if $w(x) = 0$ for some point inside $\Omega$, then $w\equiv 0$ almost everywhere in $\R^n$. The same conclusions hold for unbounded domain $\Omega$ if we further assume that
$$\liminf\limits_{|x|\to\infty}w(x)\geq0.$$
\end{theorem}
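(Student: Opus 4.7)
The plan is to imitate the proof of Theorem~\ref{t2:2023113}, but the integrand has to be regrouped in a way tailored to the nonlinear $G$, without invoking any non-degeneracy hypothesis. Assume towards a contradiction that $w$ is not non-negative in $\Omega$. Since $w \geq 0$ on $\Sigma \setminus \Omega$, $w \equiv 0$ on $T_\lambda$ by antisymmetry, and (in the unbounded case) $\liminf_{|x|\to\infty} w(x) \geq 0$, the lower semicontinuity of $w$ on $\bar{\Omega}$ forces $w$ to attain a strictly negative minimum over $\Sigma$ at some interior point $x^o \in \Omega$. Set $A = u(x^o)$, $B = u_\lambda(x^o)$, so $w(x^o) = B - A < 0$, and write $p = u(y)$, $q = u_\lambda(y)$ for $y \in \Sigma$.

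Splitting the PV integrals defining $F_{G,K} u_\lambda(x^o)$ and $F_{G,K} u(x^o)$ over $\Sigma$ and $\Sigma^c$ and using the change of variables $y \mapsto y^\lambda$ on each $\Sigma^c$ piece together with the relations $u_\lambda(y^\lambda) = u(y)$ and $u(y^\lambda) = u_\lambda(y)$ yields
$$F_{G,K} u_\lambda(x^o) - F_{G,K} u(x^o) = C_{n,\alpha} \int_\Sigma \Bigl\{ [G(B-q) - G(A-p)]\, K(x^o - y) + [G(B-p) - G(A-q)]\, K(x^o - y^\lambda) \Bigr\}\, dy.$$
I would then rewrite the integrand as
$$[G(B-q) - G(A-p)]\bigl(K(x^o - y) - K(x^o - y^\lambda)\bigr) + \bigl([G(B-q) - G(A-q)] + [G(B-p) - G(A-p)]\bigr) K(x^o - y^\lambda)$$
and show each summand is non-positive, with the second strictly so. For the first summand, $(K'_2)$ gives $K(x^o - y) \geq K(x^o - y^\lambda)$ since $|x^o_1 - y_1| < |x^o_1 - (y^\lambda)_1|$ for $x^o, y \in \Sigma$, and the minimality $w(y) \geq w(x^o)$ rearranges to $B - q \leq A - p$, so that $G(B-q) \leq G(A-p)$ by the monotonicity of $G$. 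For the second summand, $B < A$ together with strict monotonicity of $G$ makes both brackets strictly negative, and $K(x^o - y^\lambda) > 0$. Hence $F_{G,K} u_\lambda(x^o) - F_{G,K} u(x^o) < 0$, contradicting the hypothesis.

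For the rigidity part, if $w(x^o) = 0$ at some interior $x^o$, then $A = B$ collapses the second summand to zero, while the first summand remains non-positive since minimality still gives $q \geq p$, hence $G(A-q) \leq G(A-p)$. The hypothesis then forces the integrand to vanish almost everywhere on $\Sigma$, and the strict inequality $K(x^o - y) > K(x^o - y^\lambda)$ on the interior of $\Sigma$ coming from $\partial_i \bar{K}_i < 0$ in $(K'_2)$, combined with strict monotonicity of $G$, forces $p = q$, i.e.\ $w \equiv 0$ a.e.\ in $\Sigma$; antisymmetry then extends this to $\R^n$. The delicate point I expect is the integrand decomposition: in the linear case $G(t) = t$ the combination $G(B-q) + G(B-p) - G(A-p) - G(A-q)$ telescopes to $2(B-A)$ and the strict drop $B < A$ is inherited automatically, whereas here the add-and-subtract trick isolates the kernel-asymmetry in one term and the strict drop in another, letting the contradiction run using only (strict) monotonicity of $G$, without any non-degeneracy assumption.
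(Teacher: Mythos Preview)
Your proof is correct and follows essentially the same approach as the paper: the decomposition of the integrand into the kernel-asymmetry term $[G(B-q)-G(A-p)](K(x^o-y)-K(x^o-y^\lambda))$ and the strict-drop term $\bigl([G(B-q)-G(A-q)]+[G(B-p)-G(A-p)]\bigr)K(x^o-y^\lambda)$ is exactly the paper's $I_1+I_2$, and the sign arguments for each piece, as well as the rigidity step, match the paper's reasoning.
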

\begin{proof}
Suppose otherwise, then there exists a point $x^o\in\Omega$ such that
$$w(x^o)=\min\limits_{\Omega}w=\min\limits_{\Sigma}w<0.$$
By dividing $\R^n$ into the sum of $\Sigma$ and $\Sigma^c$, and using integral variable substitution for the integral on $\Sigma^c$, we have
\begin{equation}\label{2023921-e11}
\aligned
&F_{G,K}u_\lambda(x^o)-F_{G,K}u(x^o)\\
= &C_{n,\alpha}P.V.\int_{\Sigma}\left[G(u_\lambda(x^o)-u_\lambda(y))-G(u(x^o)-u(y))\right]\left[K(x^o-y)-K(x^o-y^\lambda)\right]dy \\
&+C_{n,\alpha}\int_{\Sigma}\left[G(u_\lambda(x^o)-u_\lambda(y))-G(u(x^o)-u_\lambda(y))+G(u_\lambda(x^o)-u(y))-G(u(x^o)-u(y))\right] \\ &\quad\quad\quad\quad\cdot K(x^o-y^\lambda)dy\\
=:& I_1+I_2.
\endaligned
\end{equation}
Since $K(z)$ is monotonically decreasing with respect to $|z_1|$, we have
$$K(x^o-y)-K(x^o-y^\lambda)\geq0,\quad \forall y\in \Sigma.$$
By the monotonicity of $G(\cdot)$, we can infer that
\begin{equation*}
\aligned
I_1\leq 0,\quad\text{by}~\left[u_\lambda(x^o)-u_\lambda(y)\right]-\left[ u(x^o)-u(y)\right]=w(x^o)-w(y)\leq0~\forall y\in\Sigma;
\endaligned
\end{equation*}
and
\begin{equation*}
\aligned
I_2<C_{n,\alpha}\int_{\Sigma}\left[G(u_\lambda(x^o)-u(y))-G(u(x^o)-u(y)) \right] K(x^o-y^\lambda)dy<0,\quad\text{by}~u_\lambda(x^o)-u(x^o)<0.
\endaligned
\end{equation*}
Hence
\begin{equation}\label{e1221:2023922}
F_\alpha(u_\lambda(x^o))-F_\alpha(u(x^o))<0.
\end{equation}
This contradicts our assumption, hence
\begin{equation}\label{e11:2023922}
w(x)\geq0,\quad\forall x\in\Sigma.
\end{equation}
It follows that if $w(x^o) = 0$ at some point $x\in\Omega$, then $u_\lambda(x^o)=u(x^o)$, hence
\eqref{2023921-e11} holds with $I_2 = 0$. Now, our assumption implies $I_1 \geq 0$, consequently
$$G(u_\lambda(x^o)-u_\lambda(y))-G(u(x^o)-u(y))\geq0,$$
and by the monotonicity of $G(\cdot)$, we derive,
$$w(y)\leq 0,\quad\text{almost~everywhere~in~}\Sigma.$$
Combining this with \eqref{e11:2023922},
$$w(y) = 0,\quad \text{almost~everywhere~in~} \Sigma,$$
and from the antisymmetry of $w$, we arrive at
$$w(y) = 0,\quad\text{almost~everywhere~in~} \R^n.$$
\end{proof}

\begin{theorem}\label{t3:20230921} (A boundary estimate). Assume that $w_{\lambda_o}> 0$, for $x\in\Sigma_{\lambda_o}$. Suppose $\lambda_k\searrow\lambda_o$, and $x^k\in\Sigma_{\lambda_k}$, such that
\begin{equation}
w_{\lambda_k}(x^k)=\min\limits_{\Sigma_{\lambda_k}}w_{\lambda_k}\leq0~\text{and}~x^k\to x^o\in\partial\Sigma_{\lambda_o}.
\end{equation}
Let
$\delta_k=dist(x^k,\partial\Sigma_{\lambda_k} )\equiv|\lambda_k-x^k_1|.$
Then
\begin{equation}\label{e1:2023922}
\limsup\limits_{\delta_k\to0}\frac{1}{\delta_k}\left[F_{G,K}(u_{\lambda_k}(x^k))-F_{G,K}(u(x^k))\right]<0.
\end{equation}
\end{theorem}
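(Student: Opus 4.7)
The plan is to adapt the decomposition used in the proof of Theorem \ref{t2:20230921} and extract a quantitative negative contribution of size $\delta_k$ from the piece controlled by the kernel difference. Writing
$$F_{G,K}(u_{\lambda_k}(x^k)) - F_{G,K}(u(x^k)) = I_1 + I_2,$$
with $I_1$ the integral over $\Sigma_{\lambda_k}$ carrying the factor $K(x^k-y) - K(x^k - y^{\lambda_k})$ and $I_2$ the integral carrying $K(x^k - y^{\lambda_k})$ (exactly as in \eqref{2023921-e11} with $x=x^k$, $\lambda=\lambda_k$), the minimum property $w_{\lambda_k}(x^k) \leq w_{\lambda_k}(y)$ together with the monotonicity of $G$ from $(G_1)$ and the sign $K(x^k-y) - K(x^k-y^{\lambda_k}) \geq 0$ from $(K_2')$ give $I_1 \leq 0$, while $u_{\lambda_k}(x^k) \leq u(x^k)$ combined with monotonicity of $G$ gives $I_2 \leq 0$. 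It therefore suffices to prove $I_1 \leq -c\,\delta_k$ for all large $k$ and some $c>0$.

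To make the $\delta_k$-dependence of the kernel difference explicit, I would invoke $(K_2')$ with $i=1$ to write $K(z) = \bar{K}_1(z_1^2, z')$ and apply the mean value theorem to $\bar{K}_1(\cdot, (x^k-y)')$, combined with the direct identity $(x_1^k - y_1^{\lambda_k})^2 - (x_1^k - y_1)^2 = 4\delta_k(\lambda_k - y_1)$. Using $\partial_1 \bar{K}_1 < 0$ this gives
$$K(x^k - y) - K(x^k - y^{\lambda_k}) = 4\delta_k(\lambda_k - y_1)\,\bigl|\partial_1 \bar{K}_1(\xi_k(y), (x^k-y)')\bigr|,$$
with $\xi_k(y)$ between the two squared first-coordinate values. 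I would then localize to a compact set $D \subset \Sigma_{\lambda_o}$ with $\mathrm{dist}(D, T_{\lambda_o}) > 0$ and $\inf_D w_{\lambda_o} \geq c_0 > 0$, which exists by the hypothesis $w_{\lambda_o} > 0$ on $\Sigma_{\lambda_o}$. For $k$ large, $D \subset \Sigma_{\lambda_k}$, the factor $\lambda_k - y_1$ is bounded below on $D$, and $\xi_k(y) \to (\lambda_o - y_1)^2 > 0$ uniformly, so the $C^1$-regularity of $\bar{K}_1$ away from the origin yields a uniform positive lower bound on $|\partial_1 \bar{K}_1|$ on the relevant compact set; hence $K(x^k - y) - K(x^k - y^{\lambda_k}) \geq c_1 \delta_k$ on $D$.

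For the $G$-factor, instead of a derivative lower bound (which is unavailable without \eqref{20231104-a1}), I would rely on compactness and strict monotonicity. The minimum condition rewrites as
$$[u(x^k) - u(y)] - [u_{\lambda_k}(x^k) - u_{\lambda_k}(y)] = w_{\lambda_k}(y) - w_{\lambda_k}(x^k) \geq w_{\lambda_k}(y) \geq c_0/2$$
for $y \in D$ and $k$ large, using continuity of $w_\lambda$ in $\lambda$ and $w_{\lambda_k}(x^k) \leq 0$. Since $u \in L^\infty$, both arguments of $G$ lie in a fixed compact interval on which $\eta \mapsto G(\eta + c_0/2) - G(\eta)$ is continuous and strictly positive by $(G_1)$, and therefore attains a positive minimum $c_2$; monotonicity of $G$ then yields $G(u(x^k) - u(y)) - G(u_{\lambda_k}(x^k) - u_{\lambda_k}(y)) \geq c_2$ on $D$. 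Combining, the integrand of $I_1$ is $\leq -c_1 c_2 \delta_k$ on $D$ and non-positive on $\Sigma_{\lambda_k} \setminus D$, so $I_1 \leq -C_{n,\alpha}\,c_1 c_2 |D|\,\delta_k$; together with $I_2 \leq 0$ this establishes \eqref{e1:2023922}. The main obstacle, which this argument is designed to overcome, is precisely the failure of the non-degenerate condition \eqref{20231104-a1}: it is bypassed by trading the missing derivative lower bound on $G$ for a compactness-plus-strict-monotonicity argument exploiting $u \in L^\infty$, the strict interior positivity of $w_{\lambda_o}$, and the linear-in-$\delta_k$ expansion of the kernel difference coming from $(K_2')$.
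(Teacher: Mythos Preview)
Your proof is correct and follows essentially the same approach as the paper: both bound the $I_2$-piece by zero, use $(K_2')$ with the mean value theorem to extract the factor $\delta_k$ from the kernel difference $K(x^k-y)-K(x^k-y^{\lambda_k})$, and then exploit the strict monotonicity of $G$ together with $w_{\lambda_o}>0$ to make the $G$-factor strictly negative. The only cosmetic difference is that the paper passes to the limit in the integrand directly, whereas you localize first to a compact $D\subset\Sigma_{\lambda_o}$ to obtain uniform-in-$k$ lower bounds---this is a bit more explicit but not a genuinely different route.
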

\begin{proof}
By $u_{\lambda_k}(x^k)\leq u(x^k)$, similar to \eqref{2023921-e11}, we infer
\begin{equation}\label{e12:2023922}
\aligned
&\frac{1}{\delta_k}\left[F_{G,K}(u_{\lambda_k}(x^k))-F_{G,K}(u(x^k))\right]\\
\leq &\frac{C_{n,\alpha}}{\delta_k}P.V.\int_{\Sigma_{\lambda_k}}\left[K(x^k-y)-K(x^k-y^{\lambda_k})\right]\left[G(u_{\lambda_k}(x^k)-u_{\lambda_k}(y))-G(u(x^k)-u(y))\right]dy \\
=:& I_{1k}.
\endaligned
\end{equation}
By ($K'_2$) and mean value theorem, we have
\begin{equation}\label{e13:2023922}
\aligned
&\frac{1}{\delta_k}\left[K(x^k-y)-K(x^k-y^{\lambda_k})\right] \\
=&\frac{1}{\delta_k}\left[\bar{K}_1\left(|x^k_1-y_1|^2,(x^k-y)'\right)-\bar{K}_1\left(|x^k_1-y^{\lambda_k}_1|^2,(x^k-y)'\right)\right]\\
=&\frac{|x^k_1-y_1|^2-|x^k_1-y^{\lambda_k}_1|^2}{\delta_k}\partial_1 \bar{K}_1\left(\eta_k(y),(x^k-y)'\right)\\
=&-4(\lambda_k-y_1)\partial_1 \bar{K}_1\left(\eta_k(y),(x^k-y)'\right)\\
\to&-4(\lambda_o-y_1)\partial_1 \bar{K}_1\left(\eta_o(y),(x^o-y)'\right),
\endaligned
\end{equation}
where
$$|x^k_1-y_1|^2 \leq \eta_k(y) \leq |x^k_1 - y_1^{\lambda_k}|^2$$
and hence
$$|x^o_1 -y_1|^2 \leq \eta_o(y) \leq |x^o_1 - y_1^{\lambda_o}|^2.$$
By condition ($K'_2$), the last term in \eqref{e13:2023922} is positive in $\Sigma_{\lambda_o}$.
Meanwhile, as $k\to\infty$, by the strict monotonicity of $G(\cdot)$,
\begin{equation}\label{e14:2023922}
G(u_{\lambda_k}(x^k)-u_{\lambda_k}(y))-G(u(x^k)-u(y))\to G(u_{\lambda_o}(x^o)-u_{\lambda_o}(y))-G(u(x^o)-u(y)) < 0,
\end{equation}
for all $y\in \Sigma_{\lambda_o}$.
Combining \eqref{e12:2023922}, \eqref{e13:2023922} and \eqref{e14:2023922}, we get \eqref{e1:2023922}.
\end{proof}

\vskip4mm
{\subsection{Symmetry and monotonicity in $\R^n$ in case $f'(t)\leq0$ }}
In this subsection, we consider equation \eqref{20230921-e3} and prove Theorem \ref{th1.2}-(i).

\begin{proof}
{\bf Step 1}. To show that for $\lambda$ sufficiently negative,
\begin{equation}\label{e32:2023114}
w_\lambda(x)\geq0,\quad\forall x\in\Sigma_\lambda.
\end{equation}
Suppose \eqref{e32:2023114} is violated. By \eqref{e32:202}
there exists an $x^o\in\Sigma_\lambda$, such that
$$w_\lambda(x^o) = \min\limits_{\Sigma_\lambda}w_\lambda< 0.$$
And by equation \eqref{20230921-e3},
\begin{equation*}
F_{G,K}u_\lambda(x^o)-F_{G,K}u(x^o)=f(u_\lambda(x^o))-f(u(x^o))=f'(\xi_\lambda(x^o))w_\lambda(x^o),
\end{equation*}
where $$u_\lambda(x^o)\leq \xi_\lambda(x^o) \leq u(x^o).$$
For sufficiently negative $\lambda$, $u(x^o)$ is small, and consequently, $\xi_\lambda(x^o)$ is also small. Then by the condition on $f(\cdot)$, it follows that
 \begin{equation}\label{e35:20231141}
\aligned
F_{G,K}u_\lambda(x^o)-F_{G,K}u(x^o)\geq0.
\endaligned
\end{equation}
While on the other hand, from the proof of \eqref{e1221:2023922} in Theorem \ref{t2:20230921} (Maximum principle for anti-symmetric functions), we have
 \begin{equation*}
\aligned
F_{G,K}u_\lambda(x^o)-F_{G,K}u(x^o)<0.
\endaligned
\end{equation*}
This contradicts \eqref{e35:20231141}. Hence \eqref{e32:2023114} must hold.

{\bf Step 2.}
 \eqref{e32:2023114} provides a starting point, from which we move the plane $T_\lambda$
toward the right as long as \eqref{e32:2023114} holds to its limiting position to prove that $u$ is symmetric
about the limiting plane. More precisely, let
$$\lambda_o = \sup\{\lambda | w_\mu(x) \geq 0, x\in\Sigma_\mu, \mu\leq\lambda\},$$
we show that $u$ is symmetric about the limiting plane $T_{\lambda_o}$, or
 \begin{equation}\label{e36:2023114}
w_{\lambda_o}(x)\equiv0,~~\forall x\in\Sigma_{\lambda_o}.
\end{equation}
Suppose \eqref{e36:2023114} is false, then by Theorem \ref{t2:20230921} (Maximum principle for anti-symmetric functions),
 \begin{equation*}
 w_{\lambda_o}(x)>0,~~\forall x\in\Sigma_{\lambda_o}.
 \end{equation*}
On the other hand, by the definition of $\lambda_o$, there exists a sequence $\lambda_k \searrow \lambda_o$, and
$x^k \in \Sigma_{\lambda_k}$, such that
\begin{equation}\label{e38:2023114}
w_{\lambda_k}(x^k)=\min\limits_{\Sigma_{\lambda_k}}w_{\lambda_k}<0,~~\text{and}~ \nabla w_{\lambda_k}(x^k)=0.
\end{equation}
Now, we use the assumption about $f$ ($f'(t) \leq0$ for $t$ small)
 to show that the sequence $\{x^k\}$ is bounded. In fact, if $|x^k|$ is sufficiently large, then $u(x^k)$ is small.
Then by equation \eqref{20230921-e3} and \eqref{e38:2023114},
\begin{equation}\label{e39:2023114}
F_{G,K}u_{\lambda_k}(x^k)-F_{G,K}u(x^k)=f(u_{\lambda_k}(x^k))-f(u(x^k))=f'(\xi_{\lambda_k}(x^k))w_\lambda(x^k)\geq 0,
\end{equation}
where $$u_{\lambda_k}(x^k)\leq\xi_{\lambda_k}(x^k)\leq u(x^k).$$
While on the other hand, from the proof of \eqref{e1221:2023922} in Theorem \ref{t2:20230921}, we have
 \begin{equation*}
\aligned
F_{G,K}u_{\lambda_k}(x^k)-F_{G,K}u(x^k)<0.
\endaligned
\end{equation*}
This contradicts \eqref{e39:2023114}. Hence the sequence $\{x^k\}$ must be bounded.

Now from \eqref{e38:2023114}, we have
$$w_{\lambda_o}(x^o)\leq 0,~~\text{hence}~ x^o\in \partial\Sigma_{\lambda_o};~~~~
\text{and}~~ \nabla w_{\lambda_o}(x^o) = 0.$$
It follows that
$$\frac{w_{\lambda_k}(x^k)}{\delta_k}\to0,\quad
\text{as}~ k\to+\infty.$$
Then by \eqref{e39:2023114}, we get
\begin{equation*}
\limsup\limits_{\delta_k\to0}\frac{1}{\delta_k}\left[F_{G,K}(u_{\lambda_k}(x^k))-F_{G,K}(u(x^k))\right]\geq 0.
\end{equation*}
This contradicts Theorem \ref{t3:20230921}. Hence \eqref{e36:2023114} holds. Since $x_1$ direction can be chosen arbitrarily, we conclude that $u$ is radially symmetric
about some point.
This completes the proof of Theorem \ref{th1.2}-(i).
\end{proof}

\vskip4mm
{\subsection{Symmetry and monotonicity in a ball in case $f'(t)>0$}}
In this subsection, we consider equation \eqref{20230921-e2} and prove Theorem \ref{th1.2}-(ii).
\begin{proof}
Let $$\Omega_\lambda=\Sigma_\lambda\cap B_1(0).$$
By equation \eqref{20230921-e2} we have
\begin{equation}\label{e21:20230921}
F_{G,K}u_\lambda(x)-F_{G,K}u(x)=f(u_\lambda(x))-f(u(x)).
\end{equation}
{\bf Step 1.} Choose any ray from the origin as
the positive $x_1$ direction. First we show that for $\lambda>-1$ but sufficiently close to $-1$, we have
\begin{equation}\label{e22:20230921}
w_\lambda(x)\geq0,\quad\forall x\in\Omega_\lambda.
\end{equation}
Suppose otherwise, then there exists a point $x^o\in\Omega_\lambda$, such that
$$w_\lambda(x^o)=\min\limits_{\Omega_\lambda}w_\lambda=\min\limits_{\Sigma_\lambda}w_\lambda<0.$$
With the same argument in the proof of Theorem \ref{t2:20230921}, we have
\begin{equation}\label{e23:20230921}
\aligned
&F_{G,K}u_\lambda(x^o)-F_{G,K}u(x^o)\\
= &C_{n,\alpha}P.V.\int_{\Sigma_\lambda}\left[G(u_\lambda(x^o)-u_\lambda(y))-G(u(x^o)-u(y))\right]\left[K(x^o-y)-K(x^o-y^\lambda)\right]dy \\
&+C_{n,\alpha}\int_{\Sigma_\lambda}\left[G(u_\lambda(x^o)-u_\lambda(y))-G(u(x^o)-u_\lambda(y))+G(u_\lambda(x^o)-u(y))-G(u(x^o)-u(y))\right]\\
&\quad\quad\quad\quad\quad\cdot K(x^o-y^\lambda)dy\\
\leq &C_{n,\alpha}\int_{\Sigma_\lambda} G(u_\lambda(x^o)-u(y))-G(u(x^o)-u(y)) K(x^o-y^\lambda)dy.
\endaligned
\end{equation}
Let $D:=\Sigma_\lambda\setminus\Omega_\lambda$ and by $u(y)=0$ for all $y\in D$, we obtain
\begin{equation}\label{e1:20230928}
F_{G,K}u_\lambda(x^o)-F_{G,K}u(x^o)\leq C_{n,\alpha}\int_{D}\left[G(u_\lambda(x^o))-G(u(x^o))\right]K(x^o-y^\lambda)dy.
\end{equation}
Combining \eqref{e1:20230928} and \eqref{e21:20230921}, we get
\begin{equation*}
f(u_\lambda(x^o))-f(u(x^o))\leq C_{n,\alpha}\int_{D} \left[G(u_\lambda(x^o))-G(u(x^o))\right] K(x^o-y^\lambda)dy.
\end{equation*}
Thus by ($K_1$),
\begin{equation*}
\frac{f(u_\lambda(x^o))-f(u(x^o))}{G(u_\lambda(x^o))-G(u(x^o))}\geq C_{n,\alpha}\int_{D}K(x^o-y^\lambda)dy\geq C'_{n,\alpha}\int_{D}\frac{1}{|x^o-y^\lambda|^{n+\alpha}}dy.
\end{equation*}
Then, by using Cauchy mean value theorem, we derive,
\begin{equation}\label{e4:20230928}
\frac{f'(\xi(x^o))}{G'(\xi(x^o))}\geq C'_{n,\alpha}\int_{D}\frac{1}{|x^o-y^\lambda|^{n+\alpha}}dy\geq C\frac{1}{\delta^\alpha},
\end{equation}
where $$u_\lambda(x^o)\leq \xi(x^o)\leq u(x^o),$$
and $\delta=\lambda+1$ is the width of the region $\Omega_\lambda$ in the $x_1-$direction.
We see from $u\in C(\overline{B_1(0)})$ that for $\lambda$ sufficiently close to $-1$, there exists $\varepsilon>0$ such that
$$0<u_\lambda(x^o)<u(x^o)<\varepsilon.$$
Then by \eqref{e4:20230928} and condition ($G_2$), we get a contradiction. Therefore \eqref{e22:20230921} must be true for $\lambda$ is sufficiently close to $-1$.

{\bf Step 2.} Define
$$\lambda_o= \sup\{\lambda\leq 0 | w_\mu(x) \geq 0, x\in \Omega_\mu, \mu\leq\lambda\}.$$
Now, we show that
\begin{equation}\label{e26:20230921}
\lambda_o = 0.
\end{equation}
Suppose in the contrary, $\lambda_o<0$, then by Theorem \ref{t2:20230921} (Maximum principle for anti-symmetric functions), we have
 \begin{equation*}
 w_{\lambda_o}(x)>0,~~\forall x\in\Omega_{\lambda_o}.
 \end{equation*}
On the other hand, by the definition of $\lambda_o$, there exists a sequence $0\geq\lambda_k \searrow\lambda_o$, and
$x^k \in \Omega_{\lambda_k}$, such that
\begin{equation}\label{e28:20230921}
w_{\lambda_k}(x^k)=\min\limits_{\Sigma_{\lambda_k}}w_{\lambda_k}<0,~~\text{and}~ \nabla w_{\lambda_k}(x^k)=0.
\end{equation}
There is a subsequence of $\{x^k\}$ that converges to some point $x^o$, and
from \eqref{e28:20230921}, we have
$$w_{\lambda_o}(x^o)\leq 0,~~\text{hence}~ x^o\in \partial\Sigma_{\lambda_o};~~~~
\text{and}~~ \nabla w_{\lambda_o}(x^o) = 0.$$
It follows that
$$\frac{w_{\lambda_k}(x^k)}{\delta_k}\to0,\quad
\text{as}~ k\to+\infty.$$
Then by \eqref{e21:20230921}, we get
\begin{equation}\nonumber
\limsup\limits_{\delta_k\to0}\frac{1}{\delta_k}\left[F_{G,K}(u_{\lambda_k}(x^k))-F_{G,K}(u(x^k))\right]\geq 0.
\end{equation}
This contradicts Theorem \ref{t3:20230921}. Hence \eqref{e26:20230921} holds.

Since $x_1$ direction can be chosen arbitrarily, we conclude that $u$ is radially symmetric
about the origin.
This completes the proof of Theorem \ref{th1.2}-(ii).
\end{proof}

\vskip4mm
{\subsection{Symmetry and monotonicity in $\R^n$ in case $f'(t)>0$ }}
In this subsection, we consider equation \eqref{20230921-e3} and prove Theorem \ref{th1.2}-(iii).
\begin{proof}
{\bf Step 1}. To show that for $\lambda$ sufficiently negative,
\begin{equation}\label{e32:20230921}
w_\lambda(x)\geq0,\quad\forall x\in\Sigma_\lambda.
\end{equation}
Suppose \eqref{e32:20230921} is violated, then there exists an $x^o\in\Sigma_\lambda$, such that
$$w_\lambda(x^o) = \min\limits_{\Sigma_\lambda}w_\lambda< 0.$$
And by equation \eqref{20230921-e3},
\begin{equation}\label{e31:20230921}
F_{G,K}u_\lambda(x^o)-F_{G,K}u(x^o)=f(u_\lambda(x^o))-f(u(x^o)).
\end{equation}
Let $R = |x^o|$. Choose a point $x_R \in\Sigma_\lambda$, so that
$$B_R(x_R)\subset\Sigma_\lambda~~\text{and}~~|x_R|=MR.$$
By the decay condition \eqref{20230921-e4}, for any $y\in B_R(x_R)$ we have
\begin{equation*}
u(y)\thicksim\frac{1}{M^\beta R^\beta}, ~~ u(x^o)\thicksim\frac{1}{R^\beta}, \quad\text{for}~R~\text{large}.
\end{equation*}
So, we can choose $M$ sufficiently large such that
\begin{equation}\label{e33:20230921}
u(y)\leq\frac{C_1}{M^\beta R^\beta}\leq \frac{C_2}{R^\beta}\leq u(x^o),~~\forall y\in B_R(x_R).
\end{equation}
Then for $\lambda$ sufficiently negative ($R$ is sufficiently large), there exists a $\varepsilon>0$ such that
\begin{equation*}
0<u(y)\leq u(x^o)<\varepsilon,\quad\forall y\in  B_R(x_R).
\end{equation*}
By \eqref{e23:20230921} and ($K_1$), we have
 \begin{equation}\label{e34:20230921}
\aligned
&F_{G,K}u_\lambda(x^o)-F_{G,K}u(x^o)\\
\leq &C_{n,\alpha}\int_{\Sigma_\lambda}\left[G(u_\lambda(x^o)-u(y))-G(u(x^o)-u(y))\right]K(x^o-y^\lambda)dy\\
\leq &C'_{n,\alpha}\int_{\Sigma_\lambda}\frac{G(u_\lambda(x^o)-u(y))-G(u(x^o)-u(y))}{|x^o-y^\lambda|^{n+\alpha}}dy\\
\leq &C'_{n,\alpha}\int_{B_R(x_R)}\frac{G(u_\lambda(x^o)-u(y))-G(u(x^o)-u(y))}{|x^o-y^\lambda|^{n+\alpha}}dy.
\endaligned
\end{equation}
Note that
$$0<u_\lambda(x^o)<u(x^o)<\varepsilon,\quad\text{and}~u(x^o)-u(y)\thicksim\frac{1}{R^\beta},$$
then by condition ($G'_2$) and \eqref{e34:20230921}, we have
 \begin{equation}\label{e35:20230921}
\aligned
&F_{G,K}u_\lambda(x^o)-F_{G,K}u(x^o)\\
\leq &C_{n,\alpha}c_0w_\lambda(x^o)\int_{B_R(x_R)}\frac{(u(x^o)-u(y))^\gamma}{|x^o-y^\lambda|^{n+\alpha}}dy\\
\leq &C_{n,\alpha}C w_\lambda(x^o)\frac{1}{R^{\beta \gamma+\alpha}}.
\endaligned
\end{equation}
Combining \eqref{e31:20230921} with \eqref{e35:20230921}, by condition ($G'_2$) we get
$$\frac{C}{R^{\beta \gamma+\alpha}}\leq \frac{f(u_\lambda(x^o))-f(u(x^o))}{u_\lambda(x^o)-u(x^o)}\leq C_2u^{s}(x^o).$$
This contradicts assumption \eqref{20230921-e4}. Hence \eqref{e32:20230921} must hold.

{\bf Step 2.}
Step 1 provides a starting point, from which we move the plane $T_\lambda$
toward the right as long as \eqref{e32:20230921} holds to its limiting position. Define
$$\lambda_o = \sup\{\lambda | w_\mu(x) \geq 0, x\in\Sigma_\mu, \mu\leq\lambda\},$$
we show that $u$ is symmetric about the limiting plane $T_{\lambda_o}$, or
 \begin{equation}\label{e36:20230921}
w_{\lambda_o}(x)\equiv0,~~\forall x\in\Sigma_{\lambda_o}.
\end{equation}
Suppose \eqref{e36:20230921} is false, then by Theorem \ref{t2:20230921} (Maximum principle for anti-symmetric functions),
 \begin{equation*}
 w_{\lambda_o}(x)>0,~~\forall x\in\Sigma_{\lambda_o}.
 \end{equation*}
On the other hand, by the definition of $\lambda_o$, there exists a sequence $\lambda_k \searrow \lambda_o$, and
$x^k \in \Sigma_{\lambda_k}$, such that
\begin{equation}\label{e38:20230921}
w_{\lambda_k}(x^k)=\min\limits_{\Sigma_{\lambda_k}}w_{\lambda_k}<0,~~\text{and}~ \nabla w_{\lambda_k}(x^k)=0.
\end{equation}
Now, we use condition
\eqref{20230921-e4} to show that the sequence $\{x^k\}$ is bounded. In fact, if $|x^k|$ is sufficiently large, then  $$0<u_{\lambda_k}(x^k)<u(x^k)\leq\frac{C_0}{|x^k|^\beta}<\varepsilon.$$
And by equation \eqref{20230921-e3}, \eqref{e38:20230921} and ($G'_2$),
\begin{equation}\label{e39:20230921}
F_{G,K}u_{\lambda_k}(x^k)-F_{G,K}u(x^k)=f(u_{\lambda_k}(x^k))-f(u(x^k))\geq C_2u^{s}(x^k)w_{\lambda_k}(x^k).
\end{equation}

On the other hand, by \eqref{e23:20230921} and ($K_1$), we have
 \begin{equation}\label{e40:20230921}
\aligned
&F_{G,K}u_{\lambda_k}(x^k)-F_{G,K}u(x^k)\\
\leq &C_{n,\alpha}\int_{\Sigma_{\lambda_k}}\frac{G(u_{\lambda_k}(x^k)-u(y))-G(u(x^k)-u(y))}{|x^k-y^{\lambda_k}|^{n+\alpha}}dy\\
\leq &C_{n,\alpha}\int_{ B_{R_k}(x_{R_k})}\frac{G(u_{\lambda_k}(x^k)-u(y))-G(u(x^k)-u(y))}{|x^k-y^{\lambda_k}|^{n+\alpha}}dy,
\endaligned
\end{equation}
where $R_k=|x^k|$ and $x_{R_k}$ are selected such that
$$B_{R_k}(x_{R_k})\subset \Sigma_{\lambda_k}~~\text{and}~|x_{R_k}|=M_kR_k.$$
By the decay condition \eqref{20230921-e4}, we can choose $M_k$ such that
$$0<u(y)\leq\frac{C_1}{M_k^\beta R_k^\beta}\leq\frac{C_2}{R_k^\beta}\leq u(x^k)\leq\frac{C_0}{R_k^\beta}<\varepsilon,\quad \forall y\in B_{R_k}(x_{R_k}),$$
similar to \eqref{e33:20230921}.
Then by condition (G2) and \eqref{e40:20230921}, we have
 \begin{equation}\label{e41:20230921}
\aligned
&F_{G,K}u_{\lambda_k}(x^k)-F_{G,K}u(x^k)\\
\leq &C_{n,\alpha}c_0w_{\lambda_k}(x^k)\int_{B_{R_k}(x_{R_k})}\frac{(u(x^k)-u(y))^\gamma}{|x^k-y^{\lambda_k}|^{n+\alpha}}dy\\
\leq &C_{n,\alpha}C w_\lambda(x^k)\frac{1}{{R_k}^{\beta \gamma+\alpha}}.
\endaligned
\end{equation}
Combining \eqref{e39:20230921} with \eqref{e41:20230921}, we get
$$\frac{C}{{R_k}^{\beta \gamma+\alpha}}\leq  u^{s}(x^k).$$
This contradicts assumption \eqref{20230921-e4}. Hence the sequence $\{x^k\}$ must be bounded.

Now from \eqref{e38:20230921}, we have
$$w_{\lambda_o}(x^o)\leq 0,~~\text{hence}~ x^o\in \partial\Sigma_{\lambda_o};~~~~
\text{and}~~ \nabla w_{\lambda_o}(x^o) = 0.$$
It follows that
$$\frac{w_{\lambda_k}(x^k)}{\delta_k}\to0,\quad
\text{as}~ k\to+\infty.$$
Then by \eqref{e39:20230921}, we get
\begin{equation*}\label{e42:20230921}
\limsup\limits_{\delta_k\to0}\frac{1}{\delta_k}\left[F_{G,K}(u_{\lambda_k}(x^k))-F_{G,K}(u(x^k))\right]\geq 0.
\end{equation*}
This contradicts Theorem \ref{t3:20230921}. Hence \eqref{e36:20230921} holds. Since $x_1$ direction can be chosen arbitrarily, we conclude that $u$ is radially symmetric
about some point.
This completes the proof of Theorem \ref{th1.2}-(iii).
\end{proof}

\vskip4mm
{\section{The limit of $\mathcal{L}_Ku(x)$ as $\alpha\to2$. }}
 \setcounter{equation}{0}
In this section, we investigate the limit of $\mathcal{L}_Ku(x)$ as $\alpha\to2$ for each fixed $x$.

{\bf The proof of Theorem \ref{th1.3}-(i).}
\begin{proof}
First fix $\epsilon>0$, we divide $\mathcal{L}_{\mathcal{K}}u(x)$ into two parts:
\begin{equation}\nonumber
\aligned
\mathcal{L}_{\mathcal{K}}u(x)=&\frac{4n}{\omega_n}\int_{\R^n\setminus B_\epsilon(x)}\frac{e^{-|x-y|^2}}{\Gamma(\frac{2-\alpha}{2})}\frac{u(x)-u(y)}{|x-y|^{n+\alpha}}dy
+\frac{4n}{\omega_n}\int_{ B_\epsilon(x)}\frac{e^{-|x-y|^2}}{\Gamma(\frac{2-\alpha}{2})}\frac{u(x)-u(y)}{|x-y|^{n+\alpha}}dy\\
:=&I_1+I_2.
\endaligned
\end{equation}
By $u\in C^{1,1}_{loc}\cap L^\infty(\R^n)$, it is easy to verify that
\begin{equation}\label{2023112-e11}
\aligned
\lim\limits_{\alpha\to{2^-}}I_1=0.
\endaligned
\end{equation}
Let $z=y-x$, for $|x-y|\leq\epsilon$, by Taylor expansion
$$u(x)-u(y)=-\nabla u(x)\cdot z-\frac{1}{2}\partial_{ij}u(x)z_iz_j+ O(\epsilon)|x-y|^2,$$
we get
\begin{equation}\label{2023112-e115}
\aligned
I_2=&\frac{4n}{\omega_n\Gamma(\frac{2-\alpha}{2})}\int_{B_\epsilon(x)}\frac{(-\nabla u(x)\cdot z)e^{-|x-y|^2}}{|x-y|^{n+\alpha}}dy+\frac{4n}{\omega_n\Gamma(\frac{2-\alpha}{2})}\int_{B_\epsilon(x)}\frac{O(\epsilon)|x-y|^2e^{-|x-y|^2}}{|x-y|^{n+\alpha}}dy \\
&-\frac{2n}{\omega_n\Gamma(\frac{2-\alpha}{2})}\int_{ B_\epsilon(x)}\frac{\partial_{ij}u(x)z_iz_je^{-|x-y|^2}}{|x-y|^{n+\alpha}}dy\\
:=&II_1+II_2+II_3.
\endaligned
\end{equation}
Due to symmetry,
\begin{equation}\label{2023112-e12}
\aligned
II_1=\frac{4n}{\omega_n\Gamma(\frac{2-\alpha}{2})}\int_{ B_\epsilon(0)}\frac{(-\nabla u(x)\cdot z)e^{-|z|^2}}{|z|^{n+\alpha}}dz=0.
\endaligned
\end{equation}
Next, we show that
\begin{equation}\label{2023112-e13}
\lim\limits_{\alpha\to2^-}II_2=O(\epsilon).
\end{equation}
Indeed, let
\begin{equation}\nonumber
\aligned
II_2=2nO(\epsilon)\frac{2}{\omega_n\Gamma(\frac{2-\alpha}{2})}\int_{ B_\epsilon(0)}\frac{e^{-|z|^2}}{|z|^{n+\alpha-2}}dz:=2nO(\epsilon)III,
\endaligned
\end{equation}
where, \begin{equation}\nonumber
\aligned
III
=&\frac{1}{\Gamma(\frac{2-\alpha}{2})}\int_0^{\epsilon^2}e^{-t}t^{-\frac{\alpha}{2}}dt\\
\in &\frac{1}{\Gamma(\frac{2-\alpha}{2})}\left[\int_0^{\epsilon^2}t^{\frac{2-\alpha}{2}-1}e^{-\epsilon^2}dt, \int_0^{\epsilon^2}t^{\frac{2-\alpha}{2}-1}dt\right] \\
=&\left[\frac{e^{-\epsilon^2}\epsilon^{2-\alpha}}{\frac{2-\alpha}{2}\Gamma(\frac{2-\alpha}{2})},\frac{\epsilon^{2-\alpha}}{\frac{2-\alpha}{2}\Gamma(\frac{2-\alpha}{2})}\right].
\endaligned
\end{equation}
Then by $$\lim\limits_{\alpha\to2^-}\frac{2-\alpha}{2}\Gamma(\frac{2-\alpha}{2})=\lim\limits_{\alpha\to2^-}\Gamma(\frac{2-\alpha}{2}+1)=1,$$
we get \begin{equation}\label{20231018-e1}
\aligned
\lim\limits_{\alpha\to2^-}III\in \left[e^{-\epsilon^2},1\right],
\endaligned
\end{equation}
and thus \eqref{2023112-e13} holds.

For the remaining part $II_3$, we estimate as follows
\begin{equation}\nonumber
\aligned
II_3
=&-\frac{2n}{\omega_n\Gamma(\frac{2-\alpha}{2})}\partial_{ij}u(x)\int_{ B_\epsilon(0)}\frac{z_iz_je^{-|z|^2}}{|z|^{n+\alpha}}dz\\
=&-\frac{2n}{\omega_n\Gamma(\frac{2-\alpha}{2})}\partial_{ii}u(x)\int_{ B_\epsilon(0)}\frac{z_i^2e^{-|z|^2}}{|z|^{n+\alpha}}dz\\
=&-\Delta u(x)\cdot\frac{2}{\omega_n\Gamma(\frac{2-\alpha}{2})}\int_{ B_\epsilon(0)}\frac{e^{-|z|^2}}{|z|^{n+\alpha-2}}dz\\
=&-\Delta u(x)III.
\endaligned
\end{equation}
By \eqref{20231018-e1}, we get
\begin{equation}\label{2023112-e14}
\lim\limits_{\alpha\to2^-}II_3\in\left[-\Delta u(x)e^{-\epsilon^2},-\Delta u(x)\right].
\end{equation}
Finally, let $\epsilon\to0^+$, combining \eqref{2023112-e11}, \eqref{2023112-e12}, \eqref{2023112-e13} and \eqref{2023112-e14}, we complete
the proof.
\end{proof}

{\bf The proof of Theorem \ref{th1.3}-(ii).}
\begin{proof}
First fix $\epsilon>0$, we divide $\mathcal{L}_{\mathbb{K}}u(x)$ into two parts:
$$\mathcal{L}_{\mathbb{K}}u(x)=\int_{\R^n\setminus B_\epsilon(0)}\left(2-\alpha\right)\frac{u(x)-u(y)}{\|x-y\|^{n+\alpha}}dy+\int_{ B_\epsilon(0)}\left(2-\alpha\right)\frac{u(x)-u(y)}{\|x-y\|^{n+\alpha}}dy:=I_1+I_2.$$
By $u\in C^{1,1}_{loc}\cap L^\infty(\R^n)$, it is easy to verify that
\begin{equation}\label{2023112-e21}
\aligned
\lim\limits_{\alpha\to{2^-}}I_1=0.
\endaligned
\end{equation}
Similar to \eqref{2023112-e115}, by Taylor expansion we get
\begin{equation}\nonumber
\aligned
I_2=&\left(2-\alpha\right)\int_{B_\epsilon(x)}\frac{-\nabla u(x)\cdot z}{\|x-y\|^{n+\alpha}}dy+\left(2-\alpha\right)\int_{B_\epsilon(x)}\frac{O(\epsilon)|x-y|^2}{\|x-y\|^{n+\alpha}}dy \\
&-\left(2-\alpha\right)\int_{ B_\epsilon(x)}\frac{\partial_{ij}u(x)z_iz_j}{\|x-y\|^{n+\alpha}}dy\\
:=&II_1+II_2+II_3.
\endaligned
\end{equation}
Due to symmetry,
\begin{equation}\label{2023112-e22}
\aligned
II_1=\left(2-\alpha\right)\int_{ B_\epsilon(0)}\frac{(-\nabla u(x)\cdot z)}{\|z\|^{n+\alpha}}dz=0.
\endaligned
\end{equation}
By the equivalence of norms in $\R^n$,
$$c_{n,p}|y|\leq\|y\|\leq c'_{n,p}|y|,$$
we have
\begin{equation}\label{2023112-e23}
\aligned
\left|II_2\right|\leq (2-\alpha)\left|O(\epsilon)\right|c^{-n-\alpha}_{n,p}\int_{B_\epsilon(x)}\frac{1}{|x-y|^{n+\alpha-2}}dy=\left|O(\epsilon)\right|c^{-n-\alpha}_{n,p}\omega_n\epsilon^{2-\alpha}.
\endaligned
\end{equation}
For the remaining part $II_3$, we estimate as follows
\begin{equation}\nonumber
\aligned
II_3
=&-\left(2-\alpha\right)\partial_{ij}u(x)\int_{ B_{\epsilon}(0)}\frac{z_iz_j}{\|z\|^{n+\alpha}}dz\\
=&-\left(2-\alpha\right)\partial_{ii}u(x)\int_{ B_{\epsilon}(0)}\frac{z_i^2}{\|z\|^{n+\alpha}}dz\\
=&-\frac{1}{n}\left(2-\alpha\right)\Delta u(x)\int_{ B_{\epsilon}(0)}\frac{|z|^2}{\|z\|^{n+\alpha}}dz \\
=&-\frac{1}{n}\left(2-\alpha\right)\Delta u(x)\epsilon^{2-\alpha}\int_{ B_{1}(0)}\frac{| y|^2}{\|y\|^{n+\alpha}}dy.
\endaligned
\end{equation}
By the equivalence of norms in $\R^n$,
we obtain
$$\frac{{c_{n,p}}^{-n-\alpha}\omega_n }{2-\alpha}\leq\int_{B_1(0)}\frac{|y|^2}{\|y\|^{n+\alpha}}dy\leq \frac{{c'_{n,p}}^{-\alpha-n} \omega_n}{2-\alpha}.$$
Therefore,
\begin{equation}\label{2023112-e24}
\lim\limits_{\alpha\to2^-} II_3=-C_{n,p}\Delta u(x).
\end{equation}
Finally, let $\epsilon\to0^+$, combining \eqref{2023112-e21}, \eqref{2023112-e22}, \eqref{2023112-e23} and \eqref{2023112-e24}, we complete the proof.
\end{proof}

\vskip4mm
{\section{ Acknowledgements }}
\setcounter{equation}{0}
 The authors would like to thank Professor Congming Li for many precious suggestions.

\vskip4mm
{\section{ Data Availability }}
\setcounter{equation}{0}
 No data was used for the research described in the article.

\vskip4mm
{\section{ Conflict of Interest }}
\setcounter{equation}{0}
The authors declared that they have no conflict of interest.

\vskip4mm

\end{document}